\documentclass{article}
\usepackage{amsmath, amsfonts, amssymb, amsthm}
\usepackage{epsfig}
\usepackage{hyperref}
\usepackage{color}

\newlength{\hchng}
\setlength{\hchng}{0.55in}
\newlength{\vchng}
\setlength{\vchng}{0.35in}
\addtolength{\oddsidemargin}{-\hchng}
\addtolength{\textwidth}{2\hchng}
\addtolength{\topmargin}{-\vchng}
\addtolength{\textheight}{2\vchng}

\newtheorem{thm}{Theorem}[section]
\newtheorem{prop}[thm]{Proposition}
\newtheorem{cor}[thm]{Corollary}

\newtheorem{lemma}[thm]{Lemma}
\newtheorem{defn}[thm]{Definition}

\newtheorem{preremark}[thm]{Remark}
\newenvironment{remark}{\begin{preremark}\rm}{\medskip \end{preremark}}
\numberwithin{equation}{section}


\newcommand{\R}{\mathbb R}
\newcommand{\eps}{\varepsilon}

\newcommand{\grad} {\nabla}
\newcommand{\lap} {\triangle}

\newcommand{\dx} {\; \mathrm{d} x}
\newcommand{\dd} {\; \mathrm{d}}

\newcommand{\subdiff} {\partial}
\newcommand{\MA} {\mathrm{MA}_s}

\title{A nonlocal Monge-Amp\`ere equation}
\author{Luis Caffarelli and Luis Silvestre}

\begin{document}
\maketitle
\begin{abstract}
We introduce a nonlocal analog to the Monge-Amp\`ere operator and show some of its properties. We prove that a global problem involving this operator has $C^{1,1}$ solutions in the full space.
\end{abstract}

\section{Introduction}

The classical Monge-Amp\`ere equation prescribes the values of the determinant of the Hessian of a convex function $u$.
\[ \det D^2 u = f \qquad \text{ in } \Omega.\]

It is an equation with well known applications to differential geometry and mass transportation. It also plays a central role in the regularity theory for elliptic equations in non-divergence form, in part because it can be written as the extreme operator
\[ \left( \det D^2 u(x) \right)^{1/d} = \inf \left\{a_{ij} \partial_{ij} u(x) : \det \{ a_{ij} \} = 1, \{a_{ij}\} > 0 \right\}.\]
This equality holds provided that $D^2 u(x) \geq 0$. Note that when the hessian matrix $D^2 u(x)$ has a negative eigenvalue, the right hand side is $-\infty$.

In this paper we develop a nonlocal version of the Monge-Amp\`ere operator based on this extremal property. It is in some sense a fractional order version of the Monge-Amp\`ere equation. We will see that it satisfies some properties which resemble those of the classical case.

Note that from the expression above, we can deduce that
\[\left( \det D^2 u(x) \right)^{1/d} = \inf \left\{\lap [u \circ A] (x) : \det A = 1 \right\}\]
By $u \circ A$ we mean the composition of the linear transformation $x \mapsto Ax$ with the function $u$. For any $s \in (0,2)$, we could mimic this definition in the fractional order case by
\[ F_s[u](x) = \inf \left\{-(-\lap)^{s/2} [u \circ A] (x) : \det A = 1 \right\}\]
This approach was studied by Luis Caffarelli and Fernando Charro in \cite{caffarelli2014fractional}.

The operator $(-\lap)^{s/2}$ is nonlocal. We want to obtain a Monge-Amp\`ere-like operator which can be understood as an extremal operator of integro-differential type. Because of this, it seems convenient and arguably more natural to take the infimum not over the linear functionals with determinant one, but over all measure preserving transformations. In the second order case, there is no difference between the two approaches, but for  nonlocal equations, the two operators will be significantly different. 

We now describe our definition of the nonlocal Monge-Amp\`ere operator. For every point $x$ in the domain of a convex function $u$, we write $\tilde u$ for the function with its zeroth and first order terms removed in its Taylor expansion
\[ \tilde  u (y) = u(x+y) - u(x) - y \cdot \grad u(x).\]
Then, we can easily check that the Monge-Amp\`ere operator corresponds to
\[ \left( \det D^2 u(x) \right)^{1/d} = \inf \left\{ \lap [\tilde u \circ \varphi](0) : \text{ for all } \varphi \text{ measure preserving s.t. } \varphi(0)=0 \right\}.\]

For any $s \in (0,1)$, we define our fractional order Monge-Amp\`ere-like operator using this philosophy. We write
\[ \MA u(x) = \inf \left\{ -(-\lap)^{s/2} [\tilde u \circ \varphi](0) : \text{ for all } \varphi \text{ measure preserving s.t. } \varphi(0)=0 \right\}.\]

The purpose of this article is to understand this operator and some of its properties. We review and analyze its definition in section \ref{s:definitions}. We also discuss some equivalent formulations. Indeed, the operator can also be defined as the infimum of all integro-differential operators of the form
\[ \int_{\R^d} (u(x+y) - u(x) - y \cdot \grad u(x)) K(y) \dd y,\]
from all kernels $K$ with the same distribution as $c|y|^{-d-s}$.

We also study the solvability and regularity of the following problem in the full space $\R^d$.
\begin{equation} \label{e:global-eq}
\MA u = u-\varphi \qquad \text{in } \R^d.
\end{equation}
Here $\varphi$ is a given strictly convex function which behaves asymptotically as a cone at infinity. This is a similar problem to the one studied in \cite{caffarelli2014fractional}. We prove that there exists a unique solution $u$, which is $C^{1,1}$.

Even though the definition of the operator is very natural, there are several technical difficulties that arise from its study. For example, the H\"older continuity of $\MA u$ when $u \in C^{1,1}$, which is proved in Corollary \ref{c:MA-holder}, or the fact that a supremum of subsolutions is a subsolution, proved in Lemma \ref{l:uissub}, require subtle geometrical considerations.

In section \ref{s:definitions} we give the basic definition of the operator $\MA u$, which is equivalent to the one described above. We explore this definition and extend it to nonsmooth function in section \ref{s:pointwise}. We show that in fact we can make sense of $\MA u(x)$, at every point $x$, with values in $[-\infty,+\infty]$, for any continuous function $u$. In section \ref{s:pointwise} we also show some equivalent reformulations of the operator and we prove that $\MA u(x) \to c (\det D^2 u(x))^{1/d}$ as $s \to 2$. In section \ref{s:properties} we explore several technical properties of the operator $\MA$ related to continuity and convexity. There are some delicate techincal issues involving this operator that are addressed  in this section. In section \ref{s:problem}, we prove the solvability of the equation \ref{e:global-eq}. We finish the article with some comments and open questions related to this new operator.

\section{Definitions}
\label{s:definitions}

\subsection{The nonlocal Monge-Amp\`ere operator}

We study a fractional version of the Monge-Amp\`ere equation. We use the parameter $s$ to represent the order of the equation. In this paper $s$ must be a number in the interval $s \in (1,2)$. We will define the nonlocal Monge-Amp\`ere operator as an infimum of integro-differential operators. Other values of $s$, as well as other families of integral kernels will be the object of future work.

\begin{defn}\label{d:monsterMA}
Given a function $u$ which is $C^2$ at the point $x$, we define $\MA u(x)$ by the following formula
\begin{equation} \label{e:monsterma}  \begin{split}
\MA u(x) = \inf\bigg\{ &\int_{\R^d} (u(x+y) - u(x) - y \cdot \grad u(x)) K(y) \dd y : \\
& \text{ from all kernels $K$ such that } |\{y : K(y)>r^{-d-s}\}| = |B_r| \bigg\}. 
\end{split} 
\end{equation}
\end{defn}

Let us analyze the definition. The infimum is taken over all kernels $K(y)$ whose distribution function coincides with the distribution function of the kernel of the fractional Laplacian of order $s$: $|y|^{-d-s}$.

The operator is naturally well defined when $u$ is a smooth convex function. Conversely, the infimum will be $-\infty$ at some points if $u$ is not convex. This is explained precisely in the next proposition. Observe also that if $u$ has a large growth at infinity (for example if $u(y) \geq c |y|^s$ for all $|y|$ large), then the tail of the integral above would be necessarily divergent. In the next proposition, we assume $u(y) \leq C(|y|+1)^{s-\delta}$ so that the tails of the integral are under control.

\begin{prop} \label{p:hastobeconvex}
Assume $u(y) \leq C(|y|+1)^{s-\delta}$ for some $\delta>0$ and all $y \in \R^d$. If $u$ does not have a supporting tangent plane at $x$, then $\MA u(x) = -\infty$. More precisely, if for some $y \in \R^d$, $u(x+y) < u(x) + y \cdot \grad u (x)$, then $\MA u(x) = -\infty$.
\end{prop}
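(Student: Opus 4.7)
The plan is to exhibit a single admissible kernel $K$ in the infimum in formula (\ref{e:monsterma}) for which the resulting integral equals $-\infty$. The degree of freedom I exploit is that the distribution constraint on $K$ is preserved by \emph{any} measure-preserving rearrangement of $|y|^{-d-s}$, and in particular by translations. This lets me place the singularity of $K$ at whatever point I want, and the obvious choice is the point $y_0$ where $\tilde u$ is negative.

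Concretely, I would take $K(y)=|y-y_0|^{-d-s}$. Since translation preserves Lebesgue measure, $|\{y:K(y)>r^{-d-s}\}|=|B_r(y_0)|=|B_r|$, so $K$ is admissible. Because $u$ is continuous, $\tilde u(y):=u(x+y)-u(x)-y\cdot\nabla u(x)$ is continuous at $y_0$, and the hypothesis gives $\tilde u(y_0)=-c<0$. Therefore there is a ball $B_\rho(y_0)$ on which $\tilde u\le -c/2$, and since $|y-y_0|^{-d-s}$ is non-integrable on that ball,
$$\int_{B_\rho(y_0)} \tilde u(y)\,K(y)\,dy \;\le\; -\frac{c}{2}\int_{B_\rho(y_0)}|y-y_0|^{-d-s}\,dy \;=\; -\infty.$$

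To conclude the total integral is $-\infty$ rather than an indeterminate form, I need $\int_{\R^d\setminus B_\rho(y_0)}(\tilde u)^+ K<\infty$. The growth assumption $u(y)\le C(|y|+1)^{s-\delta}$ gives $(\tilde u)^+(y)\lesssim |y|^{s-\delta}+|y|+1$ at infinity, while $K(y)\sim|y|^{-d-s}$, so the product is dominated by $|y|^{-d-\delta}+|y|^{1-d-s}$. The first is integrable at infinity because $\delta>0$, and the second because $s>1$, which is the standing assumption of the section. On any bounded region away from $y_0$ both factors are bounded. Adding the two pieces gives $\int \tilde u(y)\,K(y)\,dy=-\infty$, hence $\MA u(x)=-\infty$.

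The main obstacle is the tail analysis: the linear term $y\cdot\nabla u(x)$ inside $\tilde u$ is unavoidable (unless one assumes convexity, which is exactly what we are trying to rule out), and it grows linearly, so one must check that $K$ decays fast enough to keep $(\tilde u)^+ K$ integrable at infinity. This is precisely where the restriction $s>1$ is used; the polynomial bound with $\delta>0$ absorbs the remaining contribution coming from $u$ itself.
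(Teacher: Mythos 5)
Your proposal is correct and follows the same argument as the paper: take $K(y)=|y-y_0|^{-d-s}$, use continuity of $\tilde u$ to produce a neighborhood where the integrand is bounded above by a negative constant times the non-integrable singularity, and use the sub-$s$ growth of $u$ together with $s>1$ to control the tail. You have simply spelled out the tail integrability estimate more explicitly than the paper does.
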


\begin{proof}
If such value of $y$ exist, we can take as $K$ the kernel of the fractional Laplacian centered at $y$, that is
\[ K(z) = |z-y|^{-d-s}.\]
From the continuity of $u$, $u(z) - u(x) - z \cdot \grad u (x) < 0$ for $z$ in a neighborhood of $y$. Since $K$ is not integrable around $y$, we get
\[ \int_{B_r(y)} (u(x+z) - u(x) - z \cdot \grad u (x)) K(z) \dd z = -\infty.\]
On the other hand, the tail of the integral is an integrable function since $u(y) \leq C(|y|+1)^{s-\delta}$.
\[ z \mapsto (u(x+z) - u(x) - z \cdot \grad u (x)) K(z) \in L^1(\R^d \setminus B_r(y)).\]
Thus we conclude the proof.
\end{proof}

We now review the definition that we gave in the introduction, which is clearly equivalent to the one above. Instead of taking the infimum over kernels $K$ whose distribution coincides with $|y|^{-d-s}$, we can fix this kernel and make a measure preserving change of variables in the function $u$. That is
\begin{equation} \label{e:sup-with-transformation}
 \MA u(x) = \inf \left\{ \int_{\R^d} (u(x+\varphi(y)) - u(x) - \varphi(y) \cdot \grad u(x)) |y|^{-d-s} \dd y \ : \ \varphi \text{ is measure preserving} \right\} 
\end{equation}

Using the standard integral formula for the fractional Laplacian, we can rewrite the above expression as
\begin{equation} \label{e:sup-with-frac-lap}
\begin{aligned}
\MA u(x) = \inf \big\{  -c_{d,s}(-\lap)^{s/2} [\tilde u \circ \varphi ](0) \ : \ \varphi \text{ is measure preserving, } \varphi(0)=0 \text{ and } \\
\tilde u(y) = u(x+y) - u(x) - y \cdot \grad u(x) \big\} 
\end{aligned}
\end{equation}

Here $c_{d,s}$ is a constant depending on dimension and $s$. Its value will only matter when we study the assumptotics for $s \to 2$. In that case, it is important that $c_{d,s} \approx (2-s)$.

\section{Evaluating the operator point-wise on nonsmooth functions}

\label{s:pointwise}

The purpose of this section is to make sense of the operator $\MA u(x)$ pointwise even if $u$ is not $C^2$ at the point $x$.

The definition of supporting plane that we use is the following.
\begin{defn} \label{d:supportingplane}
A supporting plane of $u$ at a point $x$ is a linear function $\ell(y) = a \cdot y + b$ such that $\ell(x) = u(x)$ and $\ell(y) \leq u(y)$ for all $y \in \R^d$.
\end{defn}

We recall that the subdifferential of $u$ at $x$ (which we write $\subdiff u(x)$) is the set of all slopes of the supporting planes. In particular, if $u$ is differentiable at $x$ and convex, the subdifferential is the singleton $\{\grad u(x)\}$. It is elementary to show that in general the subdifferential of a continuous function is a compact convex set at any interior point of its domain. 

It is also convenient to give a separate definition for the subdifferential of $u$ in some set $\Omega$.

\begin{defn} \label{d:bigsubdifferential}
The subdifferential of $u$ in a set $\Omega$ is the set of all vectors $b$ such that the global minimum of $u(x) - b \cdot x$ (for all $x \in \R^d$) is achieved at some point $x \in \Omega$.
\end{defn}

It is a well known fact that the subdifferential in $\Omega$ is the union of all the subdifferentials at all the points $x \in \Omega$ and that the subdifferential is compact and convex if $\Omega$ is compact and convex.

The first proposition is a simple regularity condition which allows us to compute \eqref{e:monsterma} classically and obtain a finite number.

\begin{prop} \label{p:levelsets}
Assume $u(x) \leq C(1+|x|^{s-\delta})$ for some $C>0$ and $\delta>0$. If $u \in C^{1,1}(x)$ and $u$ has a supporting plane at $x$, then $\MA u(x)$ (given in Definition \ref{d:monsterMA}) is a well defined real number.
\end{prop}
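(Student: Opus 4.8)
The plan is to split the integral defining $\MA u(x)$ into a neighborhood of the origin and its complement, and to show that both pieces are uniformly bounded above and below over all measure-preserving $\varphi$. Work with the formulation \eqref{e:sup-with-transformation}, so that we must estimate $\int_{\R^d} (u(x+\varphi(y)) - u(x) - \varphi(y)\cdot \grad u(x))\,|y|^{-d-s}\,\dd y$. Write $\tilde u(z) = u(x+z)-u(x)-z\cdot\grad u(x)$. The hypothesis that $u$ has a supporting plane at $x$ means precisely that $\grad u(x)\in\subdiff u(x)$, i.e. $\tilde u \geq 0$ everywhere. Since $\tilde u \geq 0$, the integrand is nonnegative, so $\MA u(x)$, as an infimum of nonnegative quantities, is automatically $\geq 0 > -\infty$; the only real content is the finiteness from above, for which it suffices to exhibit \emph{one} admissible kernel (equivalently, one measure-preserving $\varphi$) for which the integral converges — and the natural choice is $\varphi = \mathrm{id}$, i.e. $K(y) = |y|^{-d-s}$.

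So the core of the argument is to show $\int_{\R^d} \tilde u(y)\,|y|^{-d-s}\,\dd y < \infty$. Split at $|y|=1$. For the inner part $\int_{B_1}$, the hypothesis $u\in C^{1,1}(x)$ gives a pointwise second-order bound: there is a constant $M$ with $0 \leq \tilde u(y) \leq M|y|^2$ for $|y|\leq 1$ (the lower bound from the supporting plane, the upper from $C^{1,1}$ at $x$). Hence $\int_{B_1}\tilde u(y)|y|^{-d-s}\,\dd y \leq M\int_{B_1}|y|^{2-d-s}\,\dd y$, which is finite because $s<2$ makes the exponent $2-d-s > -d$. For the outer part $\int_{\R^d\setminus B_1}$, use the growth hypothesis $u(z)\leq C(1+|z|^{s-\delta})$ together with the obvious bound $|\grad u(x)\cdot y|\leq |\grad u(x)|\,|y|$, which is itself controlled since $u\in C^{1,1}(x)$ implies $u$ is differentiable at $x$ with a finite gradient. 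This yields $\tilde u(y) \leq C'(1+|y|^{s-\delta}) + C'|y|$; since $1 < s < 2$ forces $s-\delta < s$ and also $1 < s$, the dominant power is at most $|y|^{s-\delta}$ (and the linear term $|y|$, which for $|y|\geq 1$ is $\leq |y|^{s}$ only borderline — one should instead just bound $|y| \leq |y|^{s-\delta}$ is false in general, so keep it as a separate term $|y|^{1}$ with $1<s$), and in all cases $\int_{\R^d\setminus B_1}|y|^{\alpha}|y|^{-d-s}\,\dd y<\infty$ whenever $\alpha < s$; here $\alpha \in \{s-\delta,\,1\}$ and both are $<s$. Therefore the outer integral converges as well.

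Combining the two pieces, $\int_{\R^d}\tilde u(y)|y|^{-d-s}\,\dd y$ is a finite nonnegative real number, so the infimum in \eqref{e:monsterma} is taken over a nonempty set of reals, bounded below by $0$; hence $\MA u(x) \in [0,\infty)$ is a well-defined real number. The one point requiring a little care — the \textbf{main obstacle} — is the $C^{1,1}(x)$ hypothesis: ``$u\in C^{1,1}(x)$'' is a pointwise notion (a second-order Taylor estimate at the single point $x$), not a statement about a $C^{1,1}$ neighborhood, so one must be precise that it delivers exactly the inequality $\tilde u(y)\leq M|y|^2$ for $|y|$ small, and, combined with the global growth bound, nothing more is needed. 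No uniformity over $\varphi$ is actually required for finiteness: since every admissible integrand is nonnegative, a single convergent choice suffices to cap the infimum, and nonnegativity caps it from below. (Uniform two-sided bounds over all $\varphi$, which would be needed for continuity statements, are deferred to later sections.)
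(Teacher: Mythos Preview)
Your proof is correct and follows essentially the same approach as the paper: nonnegativity of $\tilde u$ (from the supporting plane) gives the lower bound $0$ for free, and choosing the single kernel $K(y)=|y|^{-d-s}$ with the split into near ($C^{1,1}(x)$ gives $\tilde u\le M|y|^2$) and far (growth $\le C(1+|y|^{s-\delta})+C|y|$, all exponents $<s$) parts shows finiteness. The paper compresses the two pieces into the single bound $\min(C|y|^2,\,C(|y|+(1+|y|)^{s-\delta}))\,|y|^{-d-s}$, but the content is identical.
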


\begin{proof} Since $u$ is $C^{1,1}(x)$, in particular $\grad u(x)$ is well defined and the supporting plane at $x$ must have slope $\grad u(x)$. Thus $u(x+y) - u(x) - y \cdot \grad u(x) \geq 0$ for all $y \in \R^d$.

Therefore, for any $K$, the expression
\[ \int_{\R^d} (u(x+y) - u(x) - y \cdot \grad u(x)) K(y) \dd y \geq 0.\]
In order to show that the infimum over all $K$ is a real number we must exhibit one kernel $K$ for which it is finite. Let $K(x) = |x|^{-d-s}$ (the kernel of the fractional Laplacian), then
\[ 
\begin{aligned}
\int_{\R^d} (u(x+y) - u(x) - y \cdot \grad u(x)) K(y) \dd y &\leq
\int_{\R^d} \min\left(C|y|^2,C(|y| + (1+|y|)^{s-\delta}) \right) \frac 1 {|y|^{d+s}} \dd y, \\
&< +\infty \qquad \text{using that $s \in (1,2)$.}
\end{aligned}
\]
\end{proof}

We will now redefine the operator $\MA u(x)$. The following definition will coincide with definition \ref{d:monsterMA} when $u$ is $C^2$ at the point $x$. The advantage of the following definition is that it makes sense for any continuous function $u:\R^d \to \R$ and $\MA u(x) \in [-\infty,+\infty]$.

\begin{defn} \label{d:pointwise}
Given any function $u$, we define $\MA u(x)$ in the following way.
\begin{itemize}
\item[a)] If there is no vector in the subdifferential $\subdiff u(x)$, then $\MA u(x) := -\infty$.
\item[b)] If $\subdiff u(x) \neq \emptyset$, then
\[
 \begin{split}
\MA u(x) = \sup_{b \in \subdiff u} \inf \bigg\{ &\int_{\R^d} (u(x+y) - u(x) - y \cdot b) K(y) \dd y : \\
& \text{ from all kernels $K$ such that } |\{y : K(y)>r^{-n-s}\}| = |B_r| \bigg\}. 
\end{split} 
\]
\end{itemize}
\end{defn}

Note that for any $b \in \subdiff u(x)$, the integrand is nonnegative, so the integral is well defined in the extended nonnegative numbers $[0,+\infty]$.

Part a) of the definition is justified by Proposition \ref{p:hastobeconvex}.

It is important to understand when and how the supremum and infimum in Definition \ref{d:pointwise} are achieved. Let us first fix a vector $b \in \subdiff u$ and analyze the infimum over the admissible kernels $K$.
\begin{equation} \label{e:infoverK} 
 \begin{split}
\inf \bigg\{ &\int_{\R^d} (u(x+y) - u(x) - y \cdot b) K(y) \dd y : \\
& \text{ from all kernels $K$ such that } |\{y : K(y)>r^{-n-s}\}| = |B_r| \bigg\}. 
\end{split} 
\end{equation}
We will see that in most cases it is achieved at a kernel $K$ whose level sets coincide with the level sets of $(u(x+y) - u(x) - y \cdot b)$.

Indeed, since $(u(x+y) - u(x) - y \cdot b)$ is a nonnegative quantity for all $y \in \R^d$, the minimum value of the integral will certainly take place when we locate the maximum values of $K(y)$ coinciding with the smallest values of $(u(x+y) - u(x) - y \cdot b)$. That is, the set $\{K(y) > r^{-d-s}\}$ must coincide with the set of the same measure as $B_r$ where the values of $(u(x+y) - u(x) - y \cdot b)$ are as small as possible. Thus we must find $\lambda >0$ such that
\begin{equation} \label{e:choiceoflevelset}  |\{u(x+y) - u(x) - y \cdot b < \lambda\}| = |B_r|.
\end{equation}
Except for some special functions $u$ which we describe below, for each $r>0$, there is a single $\lambda>0$ for which the equality \eqref{e:choiceoflevelset}, and therefore, there is a unique kernel $K$ which achieves the minimum in \eqref{e:infoverK}.

Two special cases must be addressed in which the above analysis changes. First, it may happen that the measure in the left hand side of \eqref{e:choiceoflevelset} is discontinuous with respect to $\lambda$ and skips the value of $|B_r|$. That is the case when there is a fat level surface,
\begin{align*}
|\{u(x+y) - u(x) - y \cdot b < \lambda\}| &< |B_r|, \\
|\{u(x+y) - u(x) - y \cdot b = \lambda\}| &\geq |B_r|, \\
\intertext{and consequently,}
|\{u(x+y) - u(x) - y \cdot b = \lambda\}| &> 0.
\end{align*}
This is the case in which there is no uniqueness in the choice of $K$ since clearly any measure preserving rearrangement of the values on $K$ within the level set
$\{u(x+y) - u(x) - y \cdot b = \lambda\}$ would not affect the value of the integral. In any case, we must have
\[ \{u(x+y) - u(x) - y \cdot b \leq \lambda\} = \{K \geq r^{-d-s}\},\]
if $r$ is the radius such that
\[ |\{u(x+y) - u(x) - y \cdot b \leq \lambda\}| = |B_r|.\]
In this sense $u(x+y) - u(x) - y \cdot b$ and $K$ still have the same level sets (for each $\lambda$ there is an $r$, but not vice-versa).

The other special case is when $|\{u(x+y) - u(x) - y \cdot b \leq \lambda\}| = +\infty$ for some $\lambda < 0$. If the measure is finite for all $\lambda < \lambda_0$, then we can still find an admissible kernel $K$ supported in $\{u(x+y) - u(x) - y \cdot b \leq \lambda_0\}$ with the same level sets up to the level $\lambda_0$, as described above. In this case $K(y)=0$ for those values of $y$ where $u(x+y) - u(x) - y \cdot b > \lambda_0$.  

The case $\lambda_0 = 0$, is somewhat special. If $|\{u(x+y) - u(x) - y \cdot b = 0\}| = +\infty$, then we can find an admissible kernel $K$ whose support is entirely contained in $\{u(x+y) - u(x) - y \cdot b = 0\}$ and then $\MA u(x) = 0$. The only special case left, and the only case in which the minimum of \eqref{e:infoverK} is not achieved for any admissible $K$, is when $|\{u(x+y) - u(x) - y \cdot b < \lambda\}| = +\infty$ for all $\lambda < 0$, but $|\{u(x+y) - u(x) - y \cdot b = 0\}| < +\infty$. The next lemma says that in this case the expression \eqref{e:infoverK} equals zero.

\begin{lemma} \label{l:caseMs=0}
Assume $|\{u(x+y) - u(x) - y \cdot b < \lambda\}| = +\infty$ for all $\lambda > 0$, then the infimum \eqref{e:infoverK} equals zero.
\end{lemma}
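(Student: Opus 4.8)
The plan is to use that the integrand $u(x+y)-u(x)-y\cdot b$ is nonnegative, since $b\in\subdiff u(x)$; hence the infimum \eqref{e:infoverK} is automatically $\ge 0$, and it suffices to produce, for every $\eps>0$, an admissible kernel $K$ (i.e.\ one with the same distribution function as $|y|^{-d-s}$) such that $\int_{\R^d}(u(x+y)-u(x)-y\cdot b)\,K(y)\,\dd y\le\eps$. Write $w(y):=u(x+y)-u(x)-y\cdot b\ge0$; the hypothesis is that every sublevel set $\{w<\lambda\}$, $\lambda>0$, has infinite measure. (If $|\{w=0\}|=+\infty$ the statement is immediate — take $K$ supported on $\{w=0\}$ — but the construction below covers this uniformly.)

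I would build $K$ as a measure‑preserving rearrangement of $K_0(y):=|y|^{-d-s}$. First decompose $\R^d\setminus\{0\}$ into the dyadic annuli $A_k:=\{2^k\le|z|<2^{k+1}\}$, $k\in\Z$; there $K_0\le 2^{-k(d+s)}$ and $|A_k|=(2^d-1)\,2^{kd}\,|B_1|$, so $\sum_{k<0}|A_k|=|B_1|<\infty$, while the annuli with $k\to-\infty$ carry the full non‑integrable singularity of $K_0$ on sets of vanishing measure. Then I would define a measure‑preserving bijection $T$ of $\R^d$, matching (up to measure‑preserving maps) a refinement of the partition $\{A_k\}$ with a partition of $\R^d$ into pieces lying in suitable sublevel sets of $w$, and set $K:=K_0\circ T$; any such $K$ has the same distribution as $K_0$, hence is admissible. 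Concretely: for $k<0$ pick disjoint $D_k\subseteq\{w<\lambda_0\,2^{k(s+1)}\}$ with $|D_k|=|A_k|$ (possible because each such sublevel set has infinite measure while only finite total measure is used up). Put $U:=\R^d\setminus\bigcup_{k<0}D_k$; then $U\supseteq\{w\ge\lambda_0\}$ and $|U\cap\{w<\lambda_0\}|=+\infty$. Using $\sigma$‑finiteness, partition $\{w\ge\lambda_0\}$ into finite‑measure pieces $G_1,G_2,\dots$, each lying in a band $\{w<\Lambda_m\}$, and cover each $G_m$ by pieces of annuli $A_k$ of level $k\ge k(m)$, where $k(m)$ is taken so large that $2^{-k(m)s}\Lambda_m\le\lambda_0\,2^{-m}$; finally cover $U\cap\{w<\lambda_0\}$ with an infinite subfamily of the annuli reserved for this purpose.

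The estimate is then a geometric‑series computation. On the image of $A_k$ one has $K\le 2^{-k(d+s)}$ and $w$ bounded by the relevant threshold, so a change of variables shows the contribution of $A_k$ to $\int wK$ is $\lesssim\lambda_0\,2^{k}$ when $k<0$, is $\lesssim\lambda_0\,2^{-m}$ for the piece $G_m$, and is $\lesssim\lambda_0\,2^{-ks}$ for the annuli sent into $\{w<\lambda_0\}$; summing over all of these yields $\int_{\R^d}wK\le C_{d,s}\,\lambda_0$, and choosing $\lambda_0:=\eps/C_{d,s}$ completes the proof.

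The main obstacle — and the reason this is a lemma rather than a remark — is the bookkeeping of this rearrangement. Since the rearranged kernel must exhaust all of $\R^d$, its low‑value, large‑measure part is forced to cover the region where $w$ is large and possibly unbounded; what makes everything fit is precisely the hypothesis that every sublevel set $\{w<\lambda\}$ has infinite measure, together with the facts that $\sum_{k\ge K}|A_k|=+\infty$ for every $K$ and that $\R^d$ is $\sigma$‑finite. The delicate point is to carry out the assignment so that every annulus $A_k$ is used exactly once, every $G_m$ and $U\cap\{w<\lambda_0\}$ is covered, and each annulus is sent to a sublevel set small enough that the displayed sums converge with a bound proportional to $\lambda_0$.
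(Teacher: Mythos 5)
Your proof is correct, and it rests on the same central idea as the paper's: exploit the hypothesis that every sublevel set $\set{w<\lambda}$ of $w(y):=u(x+y)-u(x)-y\cdot b$ has infinite measure in order to place the high-value, small-measure part of the kernel in progressively smaller sublevel sets, then sum a geometric series. Your treatment of the annuli near the origin (your $k<0$) is, after reindexing, identical to the paper's placement of its level-sets $A_k$ into $B_k=\set{w<\eps 2^{-k(s+1)}}$, and yields the same bound $C\eps$. The genuine difference is in how the low-value, large-measure tail of the kernel is handled. The paper puts that entire tail inside $\set{w<\eps}$ (still infinite measure) and lets $K$ vanish on $\set{w\geq\eps}$; this is admissible because Definition \ref{d:monsterMA} only constrains the superlevel sets $\set{K>t}$ for $t>0$ and says nothing about $\set{K=0}$. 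You instead insist on realizing $K$ as $K_0\circ T$ for a measure-preserving bijection $T$ of $\R^d$, which forces the tail of $K_0$ to cover $\set{w\geq\lambda_0}$ as well, and this is precisely what creates the need for the $\sigma$-finiteness bookkeeping with the pieces $G_m$ and the reservoir $U\cap\set{w<\lambda_0}$. That bookkeeping can be carried through (your estimate $\int_{G_m}wK\lesssim\Lambda_m\sum_{k\geq k(m)}\int_{A_k}K_0\lesssim\Lambda_m 2^{-k(m)s}\lesssim\lambda_0 2^{-m}$ is the right one, and the reservoir contributes at most $\lambda_0\sum_{k\geq 0}\int_{A_k}K_0\lesssim\lambda_0$), but the entire layer can be deleted by allowing $K$ to vanish where $w\geq\lambda_0$, which is exactly what the paper does. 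In fact you noticed this freedom yourself in your parenthetical remark about taking $K$ supported on $\set{w=0}$; applying the same observation with $\set{w<\lambda_0}$ in place of $\set{w=0}$ collapses your argument to the paper's.
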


\begin{proof}
We will construct an admissible kernel $K$ which makes the integral arbitrarily small. Note that the measure of the level sets of $K$ are prescribed, and this is our only restriction in the choice of $K$. Thus we construct $K$ by describing where to locate each level set.

Let us define the sets
\begin{align*}
A_{-1} &= \{y : 0 < K(y) \leq 1\},\\
A_k &= \{y : 2^{k(d+s)} < K(y) \leq 2^{(k+1)(d+s)}\},  \qquad \text{for } k \geq 0.
\end{align*}
Note that for $k \geq 0$, $|A_k| = c 2^{-kd}$ for some constant $c$. In particular $\bigcup_{j=0}^\infty |A_k|$ is finite.

For an arbitrary $\eps>0$, we also define the sets of infinite measure
\[ B_k = \{y : u(x+y) - u(x) - y \cdot b < \eps 2^{-ks-k}\}.\] 
We place $A_0$ anywhere inside $B_0$. Since $B_0$ has infinite measure, we can certainly do it. Now we place iteratively $A_k$ inside $B_k \setminus \bigcup_{j=0}^{k-1} A_j$. Since the latter set has infinite measure, we always have enough space to do it.

We finally place $A_{-1}$ inside $B_0 \setminus \bigcup_{j=0}^{k-1} A_j$. Both sets have infinite measure, so again this is allowed.

For this choice of kernel $K$, we obtain
\[
\begin{aligned}
\int_{\R^d} (u(x+y) - u(x) - y \cdot b) K(y) \dd y &\leq
\eps \int_{A_{-1}} K(y) \dd y + \sum_{k=0}^\infty \int_{A_k} \eps 2^{-ks-k} K(y) \dd y,\\
&\leq
C \eps + \sum_{k=0}^\infty C \eps 2^{-k} \leq C \eps.
\end{aligned}
\]
Since $\eps$ is arbitrary, we make this integral arbitrarily small and the infimum in \eqref{e:infoverK} is zero.
\end{proof}

From the analysis above, we see that if the infimum in \eqref{e:infoverK} is positive, then it is achieved by some admissible kernel $K$ whose level sets coincide, in some sense, with the level sets of $(u(x+y) - u(x) - y \cdot b)$. If the infimum is zero, then it may or may not be realized depending on whether the measure of the zero level set $\{u(x+y) - u(x) - y \cdot b=0\}$ is finite or infinite. 

We now turn to the other point of view, given by the definition of $\MA u(x)$ given in \eqref{e:sup-with-transformation} or \eqref{e:sup-with-frac-lap}. It is clear that both definitions are equivalent since, by a change of variables in the integral, we can switch between rearranging the kernel $K$ or the function $\tilde u$. In this case, it is clear that we would achieve the infimum if the function $v = \tilde u \circ \varphi$ is radially symmetric and monotone increasing along the rays. Since $v$ must have the same distribution function as $u$, it must correspond to its radial rearrangement. The radial rearrangement is precisely the unique function $v$ which satisfies the following properties. Recall that $\tilde u(y) = u(x+y) - u(x) - y \cdot p$ for some $p \in \partial u(x)$.
\begin{itemize}
\item $v$ is radially symmetric.
\item $v$ is monotone increasing along rays (i.e. for any $x$, the function $t \mapsto v(tx)$ is monotone increasing for $t \in (0,+\infty)$).
\item For all values of $\lambda>0$, $|\{y: \tilde u(y) < \lambda\}|= |\{y: v(y) < \lambda\}|$.
\end{itemize}

Note that this function $v$ equals $\tilde u \circ \varphi$ for some measure preserving  bijection $\varphi$ provided that $|\{y: \tilde u(y) < \lambda\}| < +\infty$ for all $\lambda$. If $|\{y: \tilde u(y) < \lambda\}| < +\infty$ for all $\lambda < \lambda_0$ but not for $\lambda > \lambda_0$, then $v = \tilde u \circ \varphi$ for a measure preserving function $\varphi$ which is not onto. If $|\{y: \tilde u(y) < \lambda\}| = +\infty$ for all $\lambda>0$ but $|\{y: \tilde u(y) = 0\}| < +\infty$, then $v \equiv 0$, $\MA u(x) = 0$ and the measure preserving transformation $\varphi$ does not exist.

We state the fact already described in the next proposition.

\begin{prop} \label{p:radial-rearrangement}
For every $p \in \partial u(x)$, let $v_p$ be the radial rearrangement of $\tilde u$ described above. Then
\[ \MA u(x) = \sup_{p \in \partial u(x)} -c_{d,s} (-\lap)^{s/2} v_p(0). \]
\end{prop}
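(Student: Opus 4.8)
The plan is to reconcile the two descriptions of $\MA u(x)$ given in Definition \ref{d:pointwise} and in \eqref{e:sup-with-transformation}/\eqref{e:sup-with-frac-lap}. For a fixed $p \in \partial u(x)$, write $\tilde u(y) = u(x+y) - u(x) - y\cdot p$, which is a nonnegative continuous function. The inner infimum in Definition \ref{d:pointwise} is
\[ \inf\left\{ \int_{\R^d} \tilde u(y) K(y) \dd y : |\{K > r^{-d-s}\}| = |B_r| \text{ for all } r>0 \right\}, \]
and by the change of variables $y \mapsto \varphi(y)$ this equals
\[ \inf\left\{ \int_{\R^d} (\tilde u\circ\varphi)(y)\, |y|^{-d-s} \dd y : \varphi \text{ measure preserving} \right\} = \inf\left\{ -c_{d,s}(-\lap)^{s/2}[\tilde u\circ\varphi](0) : \varphi(0)=0 \right\}. \]
So it suffices to prove that this infimum equals $-c_{d,s}(-\lap)^{s/2} v_p(0)$, where $v_p$ is the radial rearrangement of $\tilde u$, and then take the supremum over $p$. (The case $\partial u(x) = \emptyset$ is handled by part a) of the definition and Proposition \ref{p:hastobeconvex}, so no work is needed there.)

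The core is therefore a Hardy–Littlewood–type rearrangement inequality: for nonnegative $f$ and $g$, $\int fg \geq \int f^* g_*$ where $f^*$ is the increasing radial rearrangement and $g_*$ the decreasing one, and conversely any value achievable by $\int (f\circ\varphi) g$ over measure-preserving $\varphi$ can be pushed down toward $\int f^* g_*$. Concretely, I would first verify that the kernel $K_*$ whose level sets are prescribed to coincide with the sublevel sets of $\tilde u$ — i.e. $\{K_* \geq r^{-d-s}\} = \{\tilde u \leq \lambda(r)\}$ with $|\{\tilde u \leq \lambda(r)\}| = |B_r|$, as analyzed in the paragraphs preceding the proposition — is admissible, and that $\int \tilde u K_* \dd y = \int v_p(y)|y|^{-d-s}\dd y = -c_{d,s}(-\lap)^{s/2}v_p(0)$, since $v_p$ is by construction the decreasing-kernel rearrangement matched against $\tilde u$. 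Then I would show no admissible $K$ can do better: using the layer-cake representation $\tilde u(y) = \int_0^\infty \mathbf 1_{\{\tilde u(y)>t\}}\dd t$ and $K(y) = \int_0^\infty \mathbf 1_{\{K(y)>\sigma\}}\dd \sigma$, Fubini gives $\int \tilde u K = \int_0^\infty\!\!\int_0^\infty |\{\tilde u > t\}\cap\{K>\sigma\}|\dd t\dd\sigma$, and for each pair $(t,\sigma)$ the measure $|\{\tilde u>t\}\cap\{K>\sigma\}|$ is at least $\max(0, |\{K>\sigma\}| - |\{\tilde u \leq t\}|) = |\{v_p > t\}\cap\{K>\sigma\}|$ because $\{v_p>t\}$ is a ball of the same measure as $\{\tilde u>t\}$ placed around the origin where $K$ is largest; integrating back recovers $\int v_p K \geq \int v_p K_* $ and one more application of the same principle (now rearranging $K$ against the fixed radial $v_p$) gives $\int \tilde u K \geq \int v_p K_*$, i.e. the infimum is attained at $K_*$ with the claimed value.

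The main obstacle is not the generic rearrangement inequality but the degenerate cases flagged in the discussion before the proposition, where $v_p$ is defined but the minimizing $K$ either is non-unique (fat level sets), lives on a proper subset (because $|\{\tilde u \leq \lambda\}| = +\infty$ for $\lambda > \lambda_0$), or does not exist at all (the Lemma \ref{l:caseMs=0} situation, where the infimum is $0$ and $v_p \equiv 0$). I would dispatch these by noting: (i) fat level sets only introduce ambiguity in how $K_*$ is arranged within a single level set of $\tilde u$, which does not change $\int \tilde u K$; (ii) when $\lambda_0 < +\infty$ one simply truncates, taking $K_*$ supported on $\{\tilde u \leq \lambda_0\}$ and zero elsewhere, and the layer-cake argument goes through with $\{v_p > t\}$ of infinite measure for $t < \lambda_0$ contributing the whole of $\{K>\sigma\}$; (iii) the no-minimizer case is exactly Lemma \ref{l:caseMs=0}, giving infimum $0 = -c_{d,s}(-\lap)^{s/2}v_p(0)$ since $v_p \equiv 0$, so the identity still holds as an infimum even though it is not attained. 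Finally, taking $\sup_{p\in\partial u(x)}$ of both sides — legitimate since $\partial u(x)$ is compact and the map $p \mapsto \tilde u$ depends affinely on $p$ — yields the stated formula, and consistency with Definition \ref{d:monsterMA} when $u$ is $C^2$ at $x$ follows because then $\partial u(x) = \{\grad u(x)\}$ is a singleton.
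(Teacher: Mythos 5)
Your strategy --- formalize the paper's informal discussion via a Hardy--Littlewood rearrangement inequality proved by a layer-cake computation --- is the right one, and in fact the paper gives no separate proof of this proposition (it says ``We state the fact already described in the next proposition'' and leans on the preceding paragraphs, where the rearrangement principle is asserted at the level of ``it is clear that we would achieve the infimum if \ldots''). However, the central layer-cake step as you have written it contains two concrete slips. First, $v_p$ is radially \emph{increasing}, so $\{v_p > t\}$ is the \emph{complement} of a ball centered at the origin, not a ball; it is $\{v_p \leq t\}$ that is the ball of measure $|\{\tilde u \leq t\}|$. Second, and more seriously, the claimed equality
\[
\max\bigl(0,\ |\{K>\sigma\}| - |\{\tilde u \leq t\}|\bigr) \ = \ |\{v_p > t\}\cap\{K>\sigma\}|
\]
is false for a general admissible $K$; it holds only after $K$ is replaced on the right by the radial kernel $K_*(y)=|y|^{-d-s}$, since then $\{K_* > \sigma\}$ is a ball centered at the origin of the same measure as $\{K>\sigma\}$, and intersecting it with the complement of the concentric ball $\{v_p \leq t\}$ yields exactly the annulus of measure $\max\bigl(0, |\{K_*>\sigma\}| - |\{v_p \leq t\}|\bigr)$.

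Once that substitution is made, integrating
\[
|\{\tilde u > t\} \cap \{K>\sigma\}| \ \geq \ \max\bigl(0,\ |\{K>\sigma\}| - |\{\tilde u \leq t\}|\bigr) \ = \ |\{v_p > t\} \cap \{K_*>\sigma\}|
\]
in $t$ and $\sigma$ gives $\int \tilde u\, K \dd y \geq \int v_p K_* \dd y = -c_{d,s}(-\lap)^{s/2}v_p(0)$ in a single step, which is what you want. Your subsequent two-stage chain (``$\int v_p K \geq \int v_p K_*$ and one more application \ldots gives $\int \tilde u K \geq \int v_p K_*$'') should be dropped: the inequality $\int v_p K \geq \int v_p K_*$ is indeed true (Hardy--Littlewood with $v_p$ already radial), but the other half you would implicitly need, namely $\int \tilde u K \geq \int v_p K$ for the \emph{same fixed} $K$, is not generally true --- the rearrangement inequality compares $\int \tilde u K$ only with the fully rearranged pair $\int v_p K_*$, not with a half-rearranged one. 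The identification of a (near-)minimizing $K$ with level sets matching those of $\tilde u$, and your dispositions of the three degenerate cases including the appeal to Lemma \ref{l:caseMs=0}, are in the right spirit and consistent with the paper's discussion.
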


\begin{remark}
An important observation is that the function $v_p$ is the same for all values of $s$.
\end{remark}

\begin{prop} \label{p:formula-with-mu}
Assume $u$ is strictly convex at the point $x$. Let
\[ \mu_b(t) := |\{ y : u(y) - u(x) - b \cdot (y-x) < t \}| .\]
Then
\[ \MA u(x) = C \sup_{b \in \subdiff u(x)} \int_0^\infty \frac 1 {\mu_b(t)^{s/d}} \dd t,\]
where $C$ is a constant depending on $d$ and $s$.
\end{prop}

\begin{proof}
Let $v$ be the function from Proposition \ref{p:radial-rearrangement}. Since $v$ is a radial function, we abuse notation by writing $v(|x|) = v(x)$. We have that
\[ \MA u(x) = \int_{\R^d} v(y) \frac 1 {|y|^{d+s}} \dd y = d |B_1| \int_0^\infty \frac{v(r)} {r^{1+s}} \dd r.\]
We observe that from the definition of $v$, $\mu(v(r)) = |B_1| r^d$. Moreover, $\mu(v(r))$ is a monotone increasing function of $r$. Differentiating this identity, we get,
\[ \dd \mu(v(r)) = d |B_1| r^{d-1} \dd r.\]

Replacing in the formula for $\MA u(x)$, we obtain
\[ \MA u(x) = \int_0^\infty \frac{v(r)}{\mu(v(r))^{1+s/d}} \dd \mu(v(r)).\]
Making the change of variables $t = v(r)$ in the integral above, we get
\[ \MA u(x) = \int_0^\infty \frac{t}{\mu(t)^{1+s/d}} \dd \mu(t) = \frac{d} s \int_0^\infty \frac{1}{\mu(t)^{s/d}} \dd t.\]
The last equality follows from integration by parts.
\end{proof}

\subsection{Limit as $s \to 2$}

The following proposition explains how the classical Monge-Amp\`ere operator is the limit of $\MA$ as $s \to 2$.

\begin{prop} \label{p:limitofoperator}
Let $u$ be a $C^2$ function which has a supporting plane at $x$. Assume that $u(x) \leq C|x|^{2-\sigma}$ for some $\sigma > 0$. Then 
\[\lim_{s \to 2} (2-s) \MA u(x) = c_d(\det D^2u(x))^{1/d}\]
 for some constant $c_d$ depending on dimension only.
\end{prop}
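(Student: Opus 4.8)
The plan is to use the formula from Proposition \ref{p:formula-with-mu}, which expresses $\MA u(x)$ as a supremum over $b \in \subdiff u(x)$ of an integral of $\mu_b(t)^{-s/d}$. Since $u$ is $C^2$ and has a supporting plane at $x$, the subdifferential is the singleton $\{\grad u(x)\}$, so there is no supremum to worry about; after translating we may assume $x=0$, $u(0)=0$, $\grad u(0)=0$, so that $\tilde u(y) = u(y)$ and
\[ \MA u(0) = C_{d,s} \int_0^\infty \mu(t)^{-s/d}\, \dd t, \qquad \mu(t) = |\{y : u(y) < t\}|. \]
The asymptotics as $s\to 2$ will be governed by the behavior of $\mu(t)$ as $t \to 0^+$, which is where the integrand has its only (integrable) singularity once $s$ is close to $2$.

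First I would establish the local asymptotics of $\mu$. Writing $Q(y) = \tfrac12 y^T D^2u(0) y$ for the quadratic part, the $C^2$ assumption gives $u(y) = Q(y) + o(|y|^2)$, and a standard argument (squeezing the sublevel set of $u$ between slightly dilated sublevel sets of $Q$) yields $\mu(t) = |\{Q < t\}|(1 + o(1))$ as $t \to 0^+$. Now $\{Q < t\}$ is an ellipsoid; a linear change of variables of determinant $(\det D^2u(0))^{1/2}$ (times a power of $2$) turns it into a ball of radius $\sqrt{t}$, so $|\{Q<t\}| = \kappa_d\, (\det D^2u(0))^{-1/2}\, t^{d/2}$ for an explicit dimensional constant $\kappa_d$. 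Hence $\mu(t)^{-s/d} = \kappa_d^{-s/d} (\det D^2u(0))^{s/(2d)} t^{-s/2}(1+o(1))$ near $t=0$.

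Next I would split the integral at a fixed small threshold $t_0$. On $(0,t_0)$ I substitute the above expansion: $\int_0^{t_0} t^{-s/2}\,\dd t = \frac{t_0^{1-s/2}}{1-s/2} = \frac{2}{2-s} t_0^{1-s/2}$, which blows up like $\tfrac{2}{2-s}$, and $t_0^{1-s/2} \to 1$ as $s\to 2$; the $o(1)$ error contributes something of the same order times $o(1)$ after integration (one should be mildly careful: the $o(1)$ is uniform on $(0,t_0)$ for $t_0$ small, so $(2-s)\int_0^{t_0}(\text{error})\,t^{-s/2} \to 0$ as first $s\to 2$ then $t_0\to 0$). On $(t_0,\infty)$, the growth hypothesis $u(y) \le C|y|^{2-\sigma}$ forces $\{u<t\} \supseteq \{C|y|^{2-\sigma}<t\}$, so $\mu(t) \gtrsim t^{d/(2-\sigma)}$, whence $\mu(t)^{-s/d} \lesssim t^{-s/(2-\sigma)}$, which is integrable on $(t_0,\infty)$ for $s$ near $2$ since $2/(2-\sigma) > 1$; thus $\int_{t_0}^\infty \mu(t)^{-s/d}\,\dd t$ stays bounded and $(2-s)$ times it vanishes as $s\to2$. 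Combining, $(2-s)\MA u(0) = (2-s)C_{d,s} \cdot \frac{2}{2-s}\kappa_d^{-s/d}(\det D^2u(0))^{s/(2d)}(1+o(1)) + o(1)$, and using $C_{d,s} = c_d/(2-s) \cdot (\text{bounded})$ — more precisely tracking the normalization $c_{d,s} \approx (2-s)$ mentioned after \eqref{e:sup-with-frac-lap} through the constant $C$ of Proposition \ref{p:formula-with-mu} — the factors of $(2-s)$ cancel and the limit is $c_d (\det D^2u(0))^{1/d}$ as $s\to 2$ (noting $\kappa_d^{-s/d} \to \kappa_d^{-2/d}$ and $(\det)^{s/(2d)} \to (\det)^{1/d}$), which is the claim.

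The main obstacle is bookkeeping the constants: one must verify that the $(2-s)$ appearing from $\int_0^{t_0} t^{-s/2}\dd t$ cancels exactly against the $(2-s)$-prefactor hidden in $C_{d,s}$ (which in turn comes from $c_{d,s}\approx(2-s)$), so that the limit is a genuine positive dimensional constant $c_d$ and not $0$ or $\infty$. A secondary technical point is making the "first $s\to2$, then $t_0\to0$" double limit rigorous — i.e. showing the error terms are controlled uniformly enough that the iterated limit equals the honest one; this is routine but needs the uniform-$o(1)$ statement for $\mu(t)/|\{Q<t\}|$ on shrinking intervals $(0,t_0)$, which follows from the $C^2$-at-$x$ hypothesis.
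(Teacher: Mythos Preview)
Your approach is correct in substance but different from the paper's. The paper does not use Proposition~\ref{p:formula-with-mu} at all; instead it invokes Proposition~\ref{p:radial-rearrangement} to write $\MA u(x) = -c_{d,s}(-\lap)^{s/2} v(0)$, where $v$ is the radial rearrangement of $\tilde u$, and then appeals to two classical facts: $(2-s)c_{d,s} \to c_d$ and $(-\lap)^{s/2} v(0) \to (-\lap) v(0)$ as $s\to 2$. The point, emphasized in the remark after Proposition~\ref{p:radial-rearrangement}, is that $v$ is the \emph{same} function for all $s$, so the limit reduces to the familiar convergence of fractional Laplacians to the Laplacian on a fixed function; the identity $\lap v(0) = (\det D^2 u(x))^{1/d}$ then follows from the quadratic behavior of $v$ at the origin, which is exactly the computation you carry out for $\mu(t)$. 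Your route via $\mu(t)^{-s/d}$ is more explicit and self-contained (it does not quote the convergence $(-\lap)^{s/2}\to -\lap$), at the cost of the splitting and double-limit bookkeeping you describe.

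One correction on the constants: you write ``$C_{d,s} = c_d/(2-s)\cdot(\text{bounded})$,'' but this is not right. Tracing the proof of Proposition~\ref{p:formula-with-mu}, the constant $C$ in front of $\int_0^\infty \mu(t)^{-s/d}\,\dd t$ is of order one as $s\to 2$ (it is essentially $\tfrac{d}{s}$ times a power of $|B_1|$). The \emph{entire} $1/(2-s)$ divergence in $\MA u(x)$ comes from $\int_0^{t_0} t^{-s/2}\,\dd t = \tfrac{2}{2-s}\,t_0^{1-s/2}$, which you already identified; the prefactor $C_{d,s}$ stays bounded, and $(2-s)\cdot C_{d,s}\cdot \tfrac{2}{2-s} \to 2\,C_{d,2}$ directly. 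So the cancellation you want is already complete after that step, and no further factor of $(2-s)$ should be sought in $C_{d,s}$. (The paper's earlier remark ``$c_{d,s}\approx (2-s)$'' refers to the normalization of the fractional Laplacian in \eqref{e:sup-with-frac-lap}, where the convention is inverse to the one relevant here; in the proof of the proposition itself they use $(2-s)c_{d,s}\to c_d$, consistent with $c_{d,s}\sim 1/(2-s)$.)
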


\begin{proof} 
We use Proposition \ref{p:radial-rearrangement}. Indeed, since we assume in this case that $u$ is differentiable at $x$, then $\partial u$ has only one point which equals $\grad u(x)$. Let $v(y)$ be the radial rearrangement of $u(x+y) - u(x) - y \cdot \grad u(x)$. We have that

\[ \MA u(x) = -c_{d,s} (-\lap)^{s/2} v(0).\]

Using that $(2-s) c_{d,s} \to c_d$ as $s \to 2$ and $(-\lap)^{s/2} v(0) \to (-\lap) v(0)$ as $s \to 2$, we get
\[ \lim_{s \to 2} (2-s) \MA u(x) = c_d \lap v(0) = c_d [ \det D^2 u(x) ]^{1/d}.\]
\end{proof}


\section{Some useful properties of $\MA$}

\label{s:properties}

In this section we prove a lemma on the monotonicity of the operator $\MA$, a lemma on the concavity of $\MA$, and a lemma on the lower semicontinuity of $\MA u$ for $u$ convex.

\begin{lemma}[Monotonicity of $\MA$] \label{l:monotonicity}
Let $u$ and $v$ be two functions such that $u(x_0) = v(x_0)$ and $u(x) \geq v(x)$ for all $x \in \R^d$. Then 
$\MA u(x_0) \geq \MA v(x_0)$.
\end{lemma}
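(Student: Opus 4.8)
The plan is to compare the two operators directly through Definition \ref{d:pointwise}. The key observation is that the hypotheses $u(x_0) = v(x_0)$ and $u \geq v$ force a containment of subdifferentials: any supporting plane $\ell$ of $v$ at $x_0$ satisfies $\ell(x_0) = v(x_0) = u(x_0)$ and $\ell(y) \leq v(y) \leq u(y)$ for all $y$, so $\ell$ is also a supporting plane of $u$ at $x_0$. Hence $\subdiff v(x_0) \subseteq \subdiff u(x_0)$. In particular, if $\subdiff v(x_0) = \emptyset$ then $\MA v(x_0) = -\infty$ and the inequality is trivial; so we may assume $\subdiff v(x_0) \neq \emptyset$, which then also gives $\subdiff u(x_0) \neq \emptyset$.

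Next I would fix an arbitrary $b \in \subdiff v(x_0)$, which by the above also lies in $\subdiff u(x_0)$. For any admissible kernel $K$, the pointwise inequality
\[ u(x_0+y) - u(x_0) - y \cdot b \ \geq\ v(x_0+y) - v(x_0) - y \cdot b \ \geq\ 0 \]
holds for every $y$ (the first from $u \geq v$ and $u(x_0) = v(x_0)$, the second because $b$ is a subdifferential slope for $v$). Since both integrands are nonnegative, integrating against $K \geq 0$ preserves the inequality, so
\[ \int_{\R^d} (u(x_0+y) - u(x_0) - y \cdot b) K(y) \dd y \ \geq\ \int_{\R^d} (v(x_0+y) - v(x_0) - y \cdot b) K(y) \dd y. \]
Taking the infimum over all admissible $K$ on both sides (the infimum on the left is over the same family, so it can only be at least the infimum of the larger integrand's family — more precisely, for each $K$ the left value dominates the right value, hence $\inf_K (\text{left}) \geq \inf_K (\text{right})$) yields, for this fixed $b$,
\[ \inf_K \int (u(x_0+y) - u(x_0) - y\cdot b) K \dd y \ \geq\ \inf_K \int (v(x_0+y) - v(x_0) - y\cdot b) K \dd y. \]

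Finally I would take the supremum over $b$. On the right side the supremum ranges over $b \in \subdiff v(x_0)$; on the left side we have established the inequality for every such $b$, and the left-hand supremum over the possibly larger set $\subdiff u(x_0)$ is at least the supremum over $\subdiff v(x_0)$. Chaining these gives $\MA u(x_0) \geq \sup_{b \in \subdiff v(x_0)} \inf_K \int(v\cdots)K \geq \MA v(x_0)$, completing the proof. I do not expect any genuine obstacle here: the only point requiring a little care is the bookkeeping with extended-real values (handling $-\infty$ and $+\infty$), and the observation that one must take the supremum over $\subdiff v(x_0)$, not $\subdiff u(x_0)$, when bounding $\MA v$ from above — but since $\subdiff v(x_0) \subseteq \subdiff u(x_0)$ this only helps. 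One could alternatively phrase the whole argument via Proposition \ref{p:radial-rearrangement} using that the radial rearrangement is monotone with respect to pointwise domination of the functions $\tilde u_p \geq \tilde v_p$, but the direct kernel argument is cleaner.
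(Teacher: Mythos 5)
Your proof is correct and follows essentially the same route as the paper's: you establish $\subdiff v(x_0) \subseteq \subdiff u(x_0)$ from the supporting-plane observation, compare the integrands pointwise for each fixed $b$ and admissible $K$, then pass to the infimum over $K$ and the supremum over $b$. The only addition is your explicit handling of the $\subdiff v(x_0)=\emptyset$ case, which the paper leaves implicit.
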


\begin{proof}
We first observe that all tangent planes of $v$ from below at $x_0$ are also tangent planes of $u$ at the same point. Therefore, $\subdiff v(x_0) \subset \subdiff u(x_0)$. Since $\MA u(x_0)$ is defined as a supremum over all $b \in \subdiff u(x_0)$, we obtain a lower bound if we restrict this suppremum over the elements $b \in \subdiff v(x_0)$.

For every $b \in \subdiff v(x_0)$ and any nonnegative kernel $K$, we have 
\[ \int_{\R^d} (u(x_0+y) - u(x_0) - y \cdot b) K(y) \dd y \geq \int_{\R^d} (v(x_0+y) - v(x_0) - y \cdot b) K(y) \dd y.\]
Taking infimum over all the admissible kernels $K$ and suppremum over $b \in \subdiff v(x_0)$ we get.
\[ \begin{split}
\MA u(x_0) &\geq \sup_{b \in \subdiff v(x_0)} \inf_K \int_{\R^d} (u(x_0+y) - u(x_0) - y \cdot b) K(y) \dd y \\
&\geq \sup_{b \in \subdiff v(x_0)} \inf_K \int_{\R^d} (v(x_0+y) - v(x_0) - y \cdot b) K(y) \dd y = \MA v(x_0).
\end{split}\]
\end{proof}


\begin{lemma}[concavity of $\MA$] \label{l:concavity}
Let $u$ and $v$ be two continuous functions, then
\[ \MA \left( \frac{u+v}2 \right) (x) \geq \frac{\MA u(x) + \MA v(x)} 2 . \] 
\end{lemma}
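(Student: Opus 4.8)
The plan is to exploit the two features that make $\MA$ concave: it is an infimum over a fixed family of linear (integral) functionals, once we fix the subdifferential vector, and an infimum of linear functionals is concave; and the "base point" correction $b$ interacts well with sums. Let me organize it around Definition \ref{d:pointwise}.

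First I would dispose of the degenerate cases. If either $\MA u(x) = -\infty$ or $\MA v(x) = -\infty$, the right-hand side is $-\infty$ and there is nothing to prove, so assume $\subdiff u(x) \neq \emptyset$ and $\subdiff v(x) \neq \emptyset$; pick $p \in \subdiff u(x)$ and $q \in \subdiff v(x)$. The key elementary observation is that $\frac{p+q}{2} \in \subdiff\!\big(\frac{u+v}{2}\big)(x)$: indeed if $\ell_1(y) = u(x) + p\cdot(y-x)$ supports $u$ and $\ell_2(y) = v(x) + q\cdot(y-x)$ supports $v$, then $\frac{\ell_1+\ell_2}{2}$ is a linear function that touches $\frac{u+v}{2}$ at $x$ and lies below it everywhere. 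Hence, writing $w = \frac{u+v}{2}$ and $b = \frac{p+q}{2}$, the vector $b$ is admissible in the supremum defining $\MA w(x)$.

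Now fix any admissible kernel $K$. The crucial identity is linearity of the integrand in the function: the "second difference with base point"
\[
\big(w(x+y) - w(x) - y\cdot b\big) = \tfrac12\big(u(x+y) - u(x) - y\cdot p\big) + \tfrac12\big(v(x+y) - v(x) - y\cdot q\big),
\]
which one checks by plugging in $w = \frac{u+v}{2}$ and $b = \frac{p+q}{2}$. Integrating against $K$ and using that the superadditivity inequality $\inf_K(A_K + B_K) \geq \inf_K A_K + \inf_K B_K$ holds for any two families, I get
\[
\inf_K \int \big(w(x+y)-w(x)-y\cdot b\big)K \;\geq\; \tfrac12\inf_K\!\int\!\big(u(x+y)-u(x)-y\cdot p\big)K \;+\; \tfrac12\inf_K\!\int\!\big(v(x+y)-v(x)-y\cdot q\big)K,
\]
all infima over the same admissible family, so each term on the right is bounded below by the corresponding pointwise infimum. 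Then I would take the supremum over $p \in \subdiff u(x)$ and $q \in \subdiff v(x)$ independently on the right (the left side does not depend on $p,q$, and $b = \frac{p+q}{2}$ ranges over a subset of $\subdiff w(x)$ as $p,q$ vary), and bound the left side below by the full supremum over $\subdiff w(x)$, which gives exactly $\MA w(x) \geq \frac{\MA u(x) + \MA v(x)}{2}$.

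I do not expect a serious obstacle here — the statement is essentially formal — but the one point that needs a little care is the handling of $+\infty$ values and the superadditivity of infima when some of the integrals may be $+\infty$; since all integrands are nonnegative (each bracketed term is $\geq 0$ because $p, q, b$ are genuine subgradients), every integral lives in $[0,+\infty]$ and the inequality $\inf(A_K + B_K) \geq \inf A_K + \inf B_K$ is valid in $[0,+\infty]$ without any integrability hypothesis, so this is harmless. The only mild subtlety worth a sentence is that we must restrict the outer supremum for $w$ to vectors of the special form $\frac{p+q}{2}$ — this only makes $\MA w(x)$ larger, which is the direction we want.
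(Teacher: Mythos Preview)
Your proposal is correct and follows essentially the same approach as the paper: dispose of the $-\infty$ case, use that $\tfrac{p+q}{2}\in\subdiff\big(\tfrac{u+v}{2}\big)(x)$, apply the superadditivity of an infimum of linear functionals, and then take suprema over $p$ and $q$. One small wording slip: the left-hand side $\inf_K\int(w(x+y)-w(x)-y\cdot b)K$ \emph{does} depend on $p,q$ through $b=\tfrac{p+q}{2}$; what you need (and what your final paragraph correctly states) is that taking $\sup_{p,q}$ of this quantity gives something bounded above by $\MA w(x)$, since the $b$'s so obtained form a subset of $\subdiff w(x)$.
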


\begin{proof}
If either $\MA u(x) = -\infty$ or $\MA v(x) = -\infty$ there is nothing to prove. So, let us assume neither thing happens.

Let $b_1 \in \subdiff u(x)$ and $b_2 \in \subdiff v(x)$. Then, we see that $(b_1 + b_2)/2 \in \subdiff (u+v)/2$. An infimum of linear operators is concave, so we have
\[ \begin{split}
\frac{\inf_K \{ \int (u(x+y) - u(x) - b_1 \cdot y) K(y) \dd y + \inf_K \{ \int (v(x+y) - v(x) - b_2 \cdot y) K(y) \dd y} 2 \\ \leq \inf_K \{ \int \left(\frac{u+v}2 (x+y) - \frac{u+v}2(x) - \frac{b_1+b_2}2 \cdot y \right) K(y) \dd y 
\end{split}
\]
Taking supremum in $b_1$ and $b_2$ we obtain $\MA u(x)+\MA v(x)$ in the left hand side. In the right hand side, the set of vectors $(b_1+b_2)/2$ form a subset of $\subdiff (u+v)/2$, therefore we get a lower bound for $\MA (u+v)/2$. Therefore, we obtain
\[ \frac{\MA u(x) + \MA v(x)} 2 \leq \MA \left( \frac{u+v}2 \right).\]
This finishes the proof.
\end{proof}

\begin{lemma}[lower semicontinuity of $\MA u$] \label{l:lower-semicontinuous}
Let $u$ be convex and $C^1$ (with uniform modulus of continuity in $\R^d$), then the function $\MA u$ is lower semicontinuous.
\end{lemma}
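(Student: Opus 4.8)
The plan is to reduce everything to the radial–rearrangement description of $\MA u$ from Proposition \ref{p:radial-rearrangement} and then run a Fatou–type argument on the sublevel sets. Since $u$ is convex and $C^1$, the subdifferential at every point $y$ is the singleton $\{\grad u(y)\}$, so Proposition \ref{p:radial-rearrangement} gives $\MA u(y)=c\int_{\R^d}v_y(z)\,|z|^{-d-s}\,dz$ for a positive constant $c=c(d,s)$, where $v_y$ is the radial rearrangement of $g_y(z):=u(y+z)-u(y)-z\cdot\grad u(y)\ge 0$. Note $v_y\ge 0$ and $v_y(0)=0$ (so in particular $\MA u(y)\ge 0$), and, writing $\mu_y(t):=|\{z:g_y(z)<t\}|$, one has $v_y(z)=V_y(|z|)$ with $V_y(r):=\inf\{t>0:\mu_y(t)\ge|B_r|\}$ the generalized inverse of $\mu_y$. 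Now fix $x$ and a sequence $x_n\to x$; I must show $\liminf_n\MA u(x_n)\ge\MA u(x)$. If $\mu_x(t)=+\infty$ for every $t>0$, then Lemma \ref{l:caseMs=0} (together with $\subdiff u(x)$ being a singleton) gives $\MA u(x)=0$ and there is nothing to prove, so I may assume $\mu_x(t_1)<\infty$ for some $t_1>0$.

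The core step is to upgrade this one finite sublevel set into a \emph{uniform} boundedness of all sublevel sets of all the nearby $g_{x_n}$, and this is where convexity is essential. The set $\{g_x<t_1\}$ is convex (as $g_x$ is convex), open and nonempty (as $g_x(0)=0<t_1$), and has finite measure $\mu_x(t_1)$; since a convex set of finite positive measure is bounded, $\{g_x<t_1\}\subseteq B_{R_0}$ for some $R_0>0$, hence $g_x\ge t_1$ on $\partial B_{R_0}$, and convexity of $g_x$ with $g_x(0)=0$ forces $g_x(z)\ge (t_1/R_0)|z|$ for all $|z|\ge R_0$; in particular $\mu_x(t)<\infty$ and $V_x(r)<\infty$ for every $t,r$. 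Because $u\in C^1$, $g_{x_n}\to g_x$ uniformly on $\overline{B_{R_0}}$ (and pointwise on $\R^d$), so with $m:=\min_{\partial B_{R_0}}g_x\ge t_1>0$ we get $g_{x_n}\ge m/2$ on $\partial B_{R_0}$ for all large $n$, and then, again by convexity and $g_{x_n}(0)=0$, $g_{x_n}(z)\ge (m/2R_0)|z|=:c_1|z|$ for $|z|\ge R_0$. Consequently, for all large $n$ and every $t>0$, the sublevel set $\{g_{x_n}<t\}$ is contained in the fixed bounded set $B_{\max(R_0,\,t/c_1)}$, so $\mu_{x_n}(t)<\infty$ and $V_{x_n}(r)<\infty$.

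With this uniform domination in hand the rest is soft. For each fixed $t>0$, pointwise convergence $g_{x_n}\to g_x$ gives $\limsup_n\mathbf 1_{\{g_{x_n}<t\}}\le\mathbf 1_{\{g_x\le t\}}$; since these indicators are dominated (for $n$ large) by $\mathbf 1_{B_{\max(R_0,t/c_1)}}\in L^1$, reverse Fatou yields $\limsup_n\mu_{x_n}(t)\le|\{g_x\le t\}|$. From this I deduce $\liminf_n V_{x_n}(r)\ge V_x(r)$ for every $r>0$: given $0<\tau<\tau'<V_x(r)$ one has $\mu_x(\tau')<|B_r|$ by definition of $V_x$, hence $|\{g_x\le\tau\}|\le\mu_x(\tau')<|B_r|$, so $\mu_{x_n}(\tau)<|B_r|$ for $n$ large, which forces $V_{x_n}(r)\ge\tau$; letting $\tau\uparrow V_x(r)$ finishes it (the case $V_x(r)=0$ is trivial). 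Therefore $\liminf_n v_{x_n}(z)\ge v_x(z)$ for every $z\ne0$, and Fatou's lemma (all integrands nonnegative) gives $\liminf_n\MA u(x_n)=c\liminf_n\int_{\R^d}v_{x_n}(z)\,|z|^{-d-s}\,dz\ge c\int_{\R^d}v_x(z)\,|z|^{-d-s}\,dz=\MA u(x)$, as desired. The only genuinely delicate point is the middle paragraph, i.e. converting local information at one level of $g_x$ into uniform control of all sublevel sets of the perturbed functions $g_{x_n}$; it is exactly there that convexity (and the $C^1$, uniformly continuous hypothesis, which guarantees $g_{x_n}\to g_x$ with no escape of mass at infinity) is indispensable, and dropping convexity breaks the reverse-Fatou step — consistent with the fact that $\MA u$ is only lower, not upper, semicontinuous.
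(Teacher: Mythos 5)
Your proof is correct, and it takes a genuinely different route from the paper's. The paper truncates the admissible kernels at height $n$, yielding an increasing sequence of approximate operators $\MA^n$. For each $n$, the linear operators $L^n$ with kernel $K^n=\min(K,n)$ have a modulus of continuity controlled by $\norm{K^n}_{L^1}$ and the (uniform) moduli of $u$ and $\grad u$, so $\MA^n u = \inf_K L^n u$ is continuous; then $\MA u = \lim_n \MA^n u$ is an increasing limit of continuous functions and hence lower semicontinuous. You instead work directly from the radial rearrangement formula of Proposition~\ref{p:radial-rearrangement}: fix $x_n \to x$, dispose of the degenerate case $\mu_x\equiv+\infty$ via Lemma~\ref{l:caseMs=0}, use convexity to show the sublevel sets $\{g_{x_n}<t\}$ are eventually trapped in a fixed ball $B_{\max(R_0,t/c_1)}$, apply reverse Fatou to get $\limsup_n\mu_{x_n}(t)\le|\{g_x\le t\}|$, translate this into $\liminf_n v_{x_n}\ge v_x$ pointwise via the generalized inverse $V_x(r)$, and conclude by Fatou on $\int v_{x_n}(z)|z|^{-d-s}\,dz$. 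Each approach has something to recommend it: the paper's argument is shorter and purely operator-theoretic, exploiting the monotone structure of the truncated family without any explicit geometry; your argument makes visible exactly where convexity enters (to prevent escape of mass of the sublevel sets to infinity, hence to license reverse Fatou) and lines up naturally with the $\mu$-formula of Proposition~\ref{p:formula-with-mu} that is used later in Lemma~\ref{l:uissub}. One small caveat: in your closing remark you attribute the "no escape of mass" to the uniform-continuity hypothesis, but in fact your middle paragraph derives it from convexity alone (the convex sublevel set $\{g_x<t_1\}$ of finite positive measure is bounded, and convexity then propagates the linear lower bound to all nearby $g_{x_n}$); the uniform continuity is what the paper's proof leans on, and your argument uses at most continuity of $\grad u$ to get $g_{x_n}\to g_x$ locally uniformly.
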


\begin{proof}
We will construct an increasing sequence of operators $\MA^n$  such that $\MA^n u$ is continuous for all $n$ and $\MA^n u \to \MA u$ pointwise. Since the limits of increasing sequences of continuous functions are always semicontinuous, this proves the lemma.

In order to construct $\MA^n$, we use Definition \ref{d:pointwise} but with a modified family of truncated kernels. That is, we replace every kernel $K$ in Definition \ref{d:pointwise}, with $K^n(y) = \min(K(y),n)$. This new kernel is in $L^1$ and therefore the corresponding operator
\[ L^n u(x) = \int (u(x+y) - u(x) - y \cdot \grad u(x)) K^n(y) \dd y, \]
is continuous, with a modulus of continuity depending on $\|K^n\|_{L^1}$ and the modulus of continuity of $u$ and $\grad u$. Therefore, for every admissible kernel $K$ in Definition \ref{d:pointwise}, the function $L^n u$ have a modulus of continuity (uniform in the choice of $K$) and 
\[ \MA^n u(x) = \inf L^n u(x)\]
is also continuous. Clearly, $\MA u(x) = \lim_{n \to \infty} \MA^n u(x)$, thus $\MA u(x)$ is lower semicontinuous.
\end{proof}

\begin{remark}
Without assuming $u \in C^1$, the function $\MA u$ may be neither lower nor upper semicontinuous. For example, if $u(x) = |x|$, then $\MA u(0) = +\infty$ whereas $\MA u (x)=0$ for all $x \neq 0$. If $u$ is a polygonal function with vertices $(1/k,1/k^2)$ for all nonzero integers $k$, then $\MA u(0) < +\infty$ whereas $\MA u(1/k) = +\infty$ for all nonzero integers $k$.
\end{remark}

Our next objective is to show that when $u$ is $C^{1,1}$ and strictly convex, then $\MA u$ is H\"older continuous $C^{1-s/2}$. For this, we define the sections
\[ D_x u(t) := \{ y : u(y) - u(x) - (y-x) \cdot \grad u(x) \leq t\}.\]
The definition makes sense provided that $u \in C^1$ so that $\grad u(x)$ is well defined (otherwise there would be some extra ambiguity in the choice of some element in $\partial u(x)$).

\begin{lemma} \label{l:sections}
Assume $u \in C^1$ and convex and $\Lambda = \mathrm{diam} \ D_{x_0} u(t) < +\infty$. Let $x_1$ be such that $2\Lambda |\grad u(x_1)-\grad u(x_0)| < t$ and $|x_0-x_1| \leq \Lambda$. Then
\[ D_{x_0} u\left(t - 2 \Lambda  |\grad u(x_1)-\grad u(x_0)|\right) \subset D_{x_1} u(t)  .\]
\end{lemma}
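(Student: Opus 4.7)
My plan is to translate the set-inclusion claim into an affine-function inequality: unpacking the definitions, it suffices to show that if
\[ u(y) - u(x_0) - (y - x_0) \cdot \grad u(x_0) \leq t - 2\Lambda \abs{\grad u(x_1) - \grad u(x_0)}, \]
then $u(y) - u(x_1) - (y - x_1) \cdot \grad u(x_1) \leq t$. Writing the two supporting affine maps as $\ell_i(z) = u(x_i) + (z - x_i) \cdot \grad u(x_i)$ for $i = 0, 1$, the target inequality becomes $\ell_1(y) - \ell_0(y) \geq -2\Lambda \abs{\grad u(x_1) - \grad u(x_0)}$.

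The key observation is that $\ell_1 - \ell_0$ is itself an affine function with gradient $\grad u(x_1) - \grad u(x_0)$. I will evaluate it at a convenient base point, namely $x_1$: there it equals $u(x_1) - \ell_0(x_1)$, which is nonnegative because $\ell_0$ is a supporting plane of the convex function $u$. Then affinity gives the Lipschitz-type lower bound
\[ \ell_1(y) - \ell_0(y) \geq (\ell_1 - \ell_0)(x_1) - \abs{y - x_1} \cdot \abs{\grad u(x_1) - \grad u(x_0)} \geq - \abs{y - x_1} \cdot \abs{\grad u(x_1) - \grad u(x_0)}. \]

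It remains to estimate $\abs{y - x_1}$ by $2\Lambda$. Since $t - 2\Lambda \abs{\grad u(x_1) - \grad u(x_0)} < t$, the candidate point $y$ lies in $D_{x_0}u(t)$; the point $x_0$ is trivially in this set too (it contributes $0 \leq t$), so $\abs{y - x_0} \leq \mathrm{diam}\, D_{x_0}u(t) = \Lambda$. Combining with the hypothesis $\abs{x_0 - x_1} \leq \Lambda$ yields $\abs{y - x_1} \leq 2\Lambda$ by the triangle inequality, which is exactly the bound needed.

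I do not expect a significant obstacle here; the only subtlety is noticing the right quantity to track, namely the affine difference $\ell_1 - \ell_0$, and realizing that $\mathrm{diam}\, D_{x_0}u(t)$ controls $\abs{y - x_0}$ precisely because $x_0$ itself belongs to every section $D_{x_0}u(t)$ with $t \geq 0$. The hypothesis $2\Lambda \abs{\grad u(x_1) - \grad u(x_0)} < t$ is essentially there to guarantee that the shifted level $t - 2\Lambda \abs{\grad u(x_1) - \grad u(x_0)}$ is still positive so that $D_{x_0} u$ at this level is nonempty; the inclusion-chasing argument works regardless, but the hypothesis ensures the statement is nontrivial.
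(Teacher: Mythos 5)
Your proof is correct and is essentially the same argument as the paper's: you compare the two supporting affine functions at $x_0$ and $x_1$, use convexity to see that the difference is nonnegative at $x_1$, and bound the remaining linear term by $|y - x_1|\,|\grad u(x_1) - \grad u(x_0)| \leq 2\Lambda\,|\grad u(x_1) - \grad u(x_0)|$ via the triangle inequality. Your packaging through the affine function $\ell_1 - \ell_0$ is slightly cleaner than the paper's direct algebraic expansion (which, incidentally, writes $(x_0 - y)$ where the correct term is $(y - x_1)$, though the final $2\Lambda$ bound is the right one and matches yours).
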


\begin{proof}
Let $y \in D_{x_0}\left(t - 2 \Lambda  |\grad u(x_1)-\grad u(x_0)|\right)$. We estimate
\begin{align*}
u(y) - u(x_1) - (y-x_1) \cdot \grad u(x_1) &= \left( u(y) - u(x_0) - (y-x_0) \cdot \grad u(x_0) \right)  \\
& \phantom{= \ } - \left( u(x_1) - u(x_0) - (x_1-x_0) \cdot \grad u(x_0) \right)\\
& \phantom{= \ } + (x_0-y) \cdot (\grad u(x_0) - \grad u(x_1)) \\
& \leq \left(t -  2 \Lambda  |\grad u(x_0)-\grad u(x_1)|\right) - 0 +  2 \Lambda  |\grad u(x_0)-\grad u(x_1)| \leq t
\end{align*}
\end{proof}

\begin{lemma} \label{l:MA-pre-holder}
Assume $u$ is convex and $D_{x_0}u(\eps)$ has diameter $\Lambda < +\infty$. Then, for any $x_1$ such that $2\Lambda |\grad u(x_1)-\grad u(x_0)| < \eps/2$ and $|x_0-x_1| \leq \Lambda$, then
\[ \MA u(x_1) - \MA u(x_0) \leq C |\grad u(x_1)-\grad u(x_0)|^{1-s/2} + {\frac {4\Lambda} \eps |\grad u(x_0) - \grad u(x_1)|} \MA u(x_0)\]
\end{lemma}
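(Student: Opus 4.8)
The goal is to compare $\MA u(x_1)$ with $\MA u(x_0)$ using the characterization from Proposition~\ref{p:formula-with-mu} (or equivalently Proposition~\ref{p:radial-rearrangement}), where the only admissible slope is $b = \grad u(x_k)$ since $u\in C^1$. Write $\delta = |\grad u(x_1)-\grad u(x_0)|$ and $\mu_k(t) = |D_{x_k}u(t)|$. By Proposition~\ref{p:formula-with-mu},
\[
\MA u(x_k) = C \int_0^\infty \mu_k(t)^{-s/d}\dd t.
\]
So everything reduces to comparing the distribution functions $\mu_1$ and $\mu_0$ of the two ``tilted'' functions. Lemma~\ref{l:sections} is exactly the tool for this: it gives $D_{x_0}u(t - 2\Lambda\delta) \subset D_{x_1}u(t)$ for $t$ in the relevant range, hence $\mu_1(t) \geq \mu_0(t - 2\Lambda\delta)$; by symmetry (swapping $x_0,x_1$, which is legitimate since the diameter of $D_{x_1}u(\eps/2)$ is controlled once we know that of $D_{x_0}u(\eps)$) we also get $\mu_0(t) \geq \mu_1(t-2\Lambda\delta)$, i.e. $\mu_1(t) \leq \mu_0(t+2\Lambda\delta)$ shifted appropriately. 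The inclusion only holds for $t \gtrsim 2\Lambda\delta$, so the integral must be split at a small threshold.

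**The two regimes.** First I would split $\int_0^\infty = \int_0^{a} + \int_{a}^\infty$ with $a$ a small multiple of $\Lambda\delta$ — say $a = 4\Lambda\delta$, which is $<\eps/2$ by hypothesis. On the inner piece $\int_0^a \mu_1(t)^{-s/d}\dd t$, I bound $\mu_1(t)$ from below by comparing with a ball: since $u$ is convex and $D_{x_1}u(t)$ contains a ball of radius $\sim \sqrt{t/\mathrm{Lip}(\grad u)}$... wait — we only have $u\in C^1$, not $C^{1,1}$, in this lemma's hypotheses, so I cannot use a Hessian bound. Instead the cleanest estimate is a one-sided one: I claim $\mu_1(t)^{-s/d} \le \mu_0(t)^{-s/d}\cdot(\text{something})$ is not needed on $[0,a]$; rather, I just want $\int_0^a \mu_1(t)^{-s/d}\,\dd t$ to be $O(\delta^{1-s/2})$. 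For that it suffices that $\mu_1(t) \ge c\, t^{d/2}$ for small $t$, which follows from convexity of $u$ at an interior point (a convex function with a supporting plane has its sublevel sets $\{u - \ell \le t\}$ containing, for small $t$, a set of measure at least $c t^{d/2}$ — this is a standard fact, true because otherwise $u$ would be linear along too many directions; alternatively one derives it from the finiteness established in Proposition~\ref{p:levelsets}). Then $\int_0^a t^{-s/2}\dd t \asymp a^{1-s/2} \asymp (\Lambda\delta)^{1-s/2}$, giving the $C\delta^{1-s/2}$ term (absorbing $\Lambda$-dependence into $C$, or carrying $\Lambda^{1-s/2}$ explicitly — the statement's constant $C$ is allowed to depend on the data).

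**The outer regime and the multiplicative term.** On $[a,\infty)$, use Lemma~\ref{l:sections}: for $t \ge a = 4\Lambda\delta > 2\Lambda\delta$ we have $\mu_1(t) \ge \mu_0(t - 2\Lambda\delta)$. Change variables $\tau = t - 2\Lambda\delta$:
\[
\int_a^\infty \mu_1(t)^{-s/d}\dd t \le \int_{a - 2\Lambda\delta}^\infty \mu_0(\tau)^{-s/d}\dd\tau \le \int_0^\infty \mu_0(\tau)^{-s/d}\dd\tau = \tfrac1C\MA u(x_0).
\]
That already gives $\MA u(x_1) \le C\delta^{1-s/2} + \MA u(x_0)$, which is \emph{almost} the claim but missing the extra multiplicative $\frac{4\Lambda}{\eps}\delta\,\MA u(x_0)$ correction on the right — so in fact the honest version keeps that correction because one actually wants to split at $a$ but then needs to \emph{recover} the missing mass near $t=0$: the inclusion $D_{x_0}u(t-2\Lambda\delta)\subset D_{x_1}u(t)$ degrades the sublevel set by a shift $2\Lambda\delta$, and over the range $t\in[a,\eps]$ this is a \emph{relative} error of size $2\Lambda\delta/t \le 2\Lambda\delta/a = 1/2$, but integrated against $\mu_0(t)^{-s/d}$ over $[0,\eps]$ it contributes at most $\frac{4\Lambda\delta}{\eps}\int_0^\eps \mu_0(t)^{-s/d}\dd t \le \frac{4\Lambda\delta}{\eps}\cdot\tfrac1C\MA u(x_0)$, which is the stated term. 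I would make this rigorous by writing $\mu_1(t)^{-s/d} \le \mu_0(t)^{-s/d}\big(\tfrac{t}{t-2\Lambda\delta}\big)^{s/d} \le \mu_0(t)^{-s/d}\big(1 + C\tfrac{\Lambda\delta}{t}\big)$ valid for $t\ge a$, integrate, and handle $\int_0^a$ as in the previous paragraph. \textbf{The main obstacle} is precisely the low-$t$ region: controlling $\int_0^{4\Lambda\delta}\mu_1(t)^{-s/d}\dd t$ without a $C^{1,1}$ bound requires a quantitative nondegeneracy of sublevel sets of a merely-$C^1$ convex function, and one must check this is uniform enough (in $x_1$ ranging over the relevant neighborhood) — that, together with verifying that swapping the roles of $x_0$ and $x_1$ in Lemma~\ref{l:sections} is legitimate (the diameter hypothesis transfers, at the cost of shrinking $\eps$ to $\eps/2$, which is exactly why the hypothesis is stated with $\eps$ but the conclusion uses $\eps/2$ implicitly through $D_{x_0}u(\eps)$), is where the real care goes.
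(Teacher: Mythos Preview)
Your overall framework---use Proposition~\ref{p:formula-with-mu}, split the integral, and compare sections via Lemma~\ref{l:sections}---matches the paper's. But there are two genuine gaps.

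\textbf{The low-$t$ piece.} The bound $\mu_1(t)\ge c\,t^{d/2}$ does require $u\in C^{1,1}$; the paper's proof explicitly invokes it (the constant $C$ in the conclusion is stated to depend on $[u]_{C^{1,1}}$, even though the lemma's hypotheses are a bit understated). Your proposed workaround from mere convexity is false: already $u(x)=|x|^4$ in one dimension gives $\mu(t)\sim t^{1/4}$, not $t^{1/2}$. So just assume $C^{1,1}$, as the paper does, and the estimate $\int_0^{2\Lambda\delta}\mu_1(t)^{-s/d}\,\dd t \le C\delta^{1-s/2}$ is immediate.

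\textbf{The high-$t$ piece: this is the real gap.} You apply Lemma~\ref{l:sections} with the fixed constant $\Lambda$ for \emph{all} $t\ge a$, obtaining $\mu_1(t)\ge\mu_0(t-2\Lambda\delta)$ on the whole ray. But Lemma~\ref{l:sections} has $\Lambda=\mathrm{diam}\,D_{x_0}u(t)$, which is only $\le\Lambda$ when $t\le\eps$; for $t>\eps$ the diameter is larger and the shift is correspondingly bigger. Your first calculation therefore proves something \emph{stronger} than the lemma (no multiplicative correction at all), which should have been a red flag---instead you try to ``recover'' the $\tfrac{4\Lambda}{\eps}\delta\,\MA u(x_0)$ term as if it were a deficiency, when in fact it is forced by the growth of the sections. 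The paper's missing ingredient is a convexity/homothety argument: for $t>\eps$ one shows $D_{x_0}u(t)$ lies in the dilate of $D_{x_0}u(\eps)$ by factor $t/\eps$ about $x_0$, hence $\mathrm{diam}\,D_{x_0}u(t)\le (t/\eps)\Lambda$. Applying Lemma~\ref{l:sections} at level $t$ with this $t$-dependent diameter yields $\mu_1(t)\ge\mu_0\big(t(1-\tfrac{2\Lambda}{\eps}\delta)\big)$, a \emph{multiplicative} shift; the change of variables then produces the factor $\big(1-\tfrac{2\Lambda}{\eps}\delta\big)^{-1}\le 1+\tfrac{4\Lambda}{\eps}\delta$, which is exactly the second term in the conclusion.
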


Note that we will apply Lemma \ref{l:MA-pre-holder} to functions for which $\MA u$ is bounded. Therefore, the first term in the right hand side of the estimate is the most significant.

\begin{proof}
From Lemma \ref{l:sections}, $D_{x_1}u(\eps/2) \subset D_{x_0}u(\eps)$ for $x_1$.

We estimate $\MA u(x_1) - \MA u(x_0)$ using Proposition \ref{p:formula-with-mu}. We have
\[ \MA u(x_1) = C \int_0^\infty \frac 1 {\mu(t)^{s/d}} \dd t, \]
where
\[ \mu(t) = |D_{x_1}u(t)|.\]
Since $u \in C^{1,1}$, $\mu(t) \geq c t^{d/2}$. Therefore
\begin{equation} \label{e:patch2}
 \int_0^{2 \Lambda|\grad u(x_1)-\grad u(x_0)|} \frac 1 {\mu(t)^{s/d}} \dd t \leq C |\grad u(x_1)-\grad u(x_0)|^{1-s/2}.
\end{equation}

Here $C$ depends on $s$ and $[u]_{C^{1,1}}$.

Now we estimate the integral in the range $t \in [2 \Lambda|\grad u(x_1)-\grad u(x_0)|, \eps]$. We use Lemma \ref{l:sections} for this part.
\begin{equation} \label{e:patch1}
 \int_{2 \Lambda|\grad u(x_1)-\grad u(x_0)|}^\eps \frac 1 {|D_{x_1} u(t)|^{s/d}} \dd t \leq  \int_{0}^{\eps-2 \Lambda|\grad u(x_1)-\grad u(x_0)|} \frac 1 {|D_{x_0} u(t)|^{s/d}} \dd t .
\end{equation}

In order to estimate the remaining part of the integral, we need an estimate for $\mathrm{diam} \ D_{x_0} u(t)$ for $t > \eps$. We obtain this estimate using convexity. Let $y \in D_{x_0} u(t)$. There is a $z \in \partial D_{x_0} u(\eps)$ such that $y-x_0 = \lambda (z-x_0)$ for some $\lambda > 1$ ($z$ is the intersection of $\partial D_{x_0}u(\eps)$ with the line segment with endpoints at $x_0$ and $z$). From the definition of convexity,
\[ u(z) \leq \frac{\lambda-1}\lambda u(x_0) + \frac{1}\lambda u(y).\]
Consequently,
\begin{align*}
t = u(y) - u(x_0) - (y-x_0) \cdot \grad u(x_0) \geq \lambda \big(u(z) - u(x_0)- (z-x_0) \cdot \grad u(x_0) \big) = \lambda \eps.
\end{align*}
This means that $\lambda \leq t/\eps$. In other words, the set $D_{x_0} u(t)$ is contained in the homothety of $D_{x_0} u(\eps)$ centered at $x_0$ with ratio $t/\eps$. In particular
\[ \mathrm{diam} \ D_{x_0} u(t) \leq \frac t \eps \ \mathrm{diam} \ D_{x_0} u(\eps) = \frac t \eps \Lambda.\]

Therefore
\begin{align*}
\int_\eps^\infty \frac 1 {|D_{x_1} u(t)|^{d/s}} \dd t \leq \int_\eps^\infty \frac 1 {|D_{x_0} u(t - 2 \Lambda t / \eps |\grad u(x_0) - \grad u(x_1)|)|^{d/s}} \dd t, \\
= \frac 1 {1-\frac {2\Lambda} \eps |\grad u(x_0) - \grad u(x_1)|} \int_{(\eps - 2\Lambda |\grad u(x_0) - \grad u(x_1)|)} ^ \infty \frac 1 {|D_{x_0} u(t)|^{d/s}} \dd t
\end{align*}

Adding up the term in \eqref{e:patch1},
\begin{align*} 
 \int_{2 \Lambda|\grad u(x_1)-\grad u(x_0)|}^\infty 
\frac 1 {|D_{x_1} u(t)|^{d/s}} \dd t &\leq
\frac 1 {1-\frac {2\Lambda} \eps |\grad u(x_0) - \grad u(x_1)|} \int_{0} ^ \infty \frac 1 {|D_{x_0} u(t)|^{d/s}} \dd t , \\ 
 &= \frac 1 {1-\frac {2\Lambda} \eps |\grad u(x_0) - \grad u(x_1)|} \MA u(x_0).
\end{align*}

Combining with \eqref{e:patch2},
\[ \MA u(x_1) \leq  \frac 1 {1-\frac {2\Lambda} \eps |\grad u(x_0) - \grad u(x_1)|} \MA u(x_0) + C |\grad u(x_1) - \grad u (x_0)|^{1-s/2}.\]

Therefore
\begin{align*} 
 \MA u(x_1) - \MA u(x_0) &\leq \frac {\frac {2\Lambda} \eps |\grad u(x_0) - \grad u(x_1)|} {1-\frac {2\Lambda} \eps |\grad u(x_0) - \grad u(x_1)|} \MA u(x_0) + C |\grad u(x_1) - \grad u (x_0)|^{1-s/2} , \\ 
 &\leq {\frac {4\Lambda} \eps |\grad u(x_0) - \grad u(x_1)|} \MA u(x_0) + C |\grad u(x_1) - \grad u (x_0)|^{1-s/2}.
\end{align*}

\end{proof}

\begin{cor} \label{c:MA-holder}
Assume $u$ is convex and $C^{1,1}$, $\MA u$ is bounded in a neighborhood of $x_0$, and $D_{x_0}u(\eps)$ has diameter $\Lambda < +\infty$. Then $\MA u \in C^{1-s/2}$ in a neighborhood of $x_0$.
\end{cor}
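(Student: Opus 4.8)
The plan is to read off two-sided H\"older bounds for $\MA u$ near $x_0$ from Lemma~\ref{l:MA-pre-holder}. That lemma already gives a one-sided bound $\MA u(x_1)-\MA u(x_0)\lesssim|\grad u(x_1)-\grad u(x_0)|^{1-s/2}$ when $x_1$ is close to $x_0$; to upgrade it to genuine continuity on a neighborhood one must (i) apply the lemma with \emph{arbitrary} base points in a small ball around $x_0$, for which one needs uniform control on the diameter of the relevant sections, (ii) convert gradient differences to point differences using $u\in C^{1,1}$, (iii) absorb the lower-order term, and (iv) symmetrize in the two points.

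First I would fix the neighborhood and record the geometric input. Write $L=[u]_{C^{1,1}}$, and denote by $\ell_x(y)=u(x)+(y-x)\cdot\grad u(x)$ the tangent plane of $u$ at $x$. Since $x_0\in D_{x_0}u(\eps)$, the quantity $\mathrm{diam}\,D_{x_0}u(\eps)=\Lambda$ already controls $|z-x_0|$ for every $z\in D_{x_0}u(\eps)$, and by the homothety property used in the proof of Lemma~\ref{l:MA-pre-holder} all sections at $x_0$ are then bounded. The key claim is that there is $\rho>0$ with $\mathrm{diam}\,D_{x_1}u(\eps/2)\le C\Lambda$ for every $x_1\in B_\rho(x_0)$. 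To see it, take $y\in D_{x_1}u(\eps/2)$ and set $R=|y-x_0|$; since $\ell_{x_1}-\ell_{x_0}$ is affine with $|\ell_{x_1}(y)-\ell_{x_0}(y)|\le C\rho+RL\rho$ by $C^{1,1}$, while the convexity of $t\mapsto u(x_0+t(y-x_0))-\ell_{x_0}(x_0+t(y-x_0))$ together with $|z-x_0|\le\Lambda$ at the exit point $z$ of the segment $[x_0,y]$ from $D_{x_0}u(\eps)$ forces $u(y)-\ell_{x_0}(y)\ge\eps R/\Lambda$, we obtain $\eps R/\Lambda\le\eps/2+C\rho+RL\rho$, hence $R\le C\Lambda$ once $\rho$ is small. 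Shrinking $\rho$ so that in addition $\MA u\le M<\infty$ on $B_\rho(x_0)$, we may apply Lemma~\ref{l:MA-pre-holder} with any base point in $B_\rho(x_0)$, with ``$\Lambda$'' replaced by the fixed constant $C\Lambda$ and ``$\eps$'' by $\eps/2$.

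Then I would run the estimate. Choose $\rho'\le\rho$ so small that every pair $x_1,x_2\in V:=B_{\rho'}(x_0)$ satisfies the smallness hypotheses of Lemma~\ref{l:MA-pre-holder} with base $x_1$ and perturbed point $x_2$ (this reduces, via $|\grad u(x_2)-\grad u(x_1)|\le L|x_1-x_2|$, to $|x_1-x_2|$ being sufficiently small). The lemma then gives
\[ \MA u(x_2)-\MA u(x_1)\le C|\grad u(x_2)-\grad u(x_1)|^{1-s/2}+\frac{C\Lambda}{\eps}\,|\grad u(x_1)-\grad u(x_2)|\,\MA u(x_1).\]
Using $|\grad u(x_1)-\grad u(x_2)|\le L|x_1-x_2|$, $\MA u(x_1)\le M$, and $|x_1-x_2|\le|x_1-x_2|^{1-s/2}$ (valid for $|x_1-x_2|\le1$, since $1-s/2\in(0,1)$ because $s\in(1,2)$), this becomes $\MA u(x_2)-\MA u(x_1)\le C''|x_1-x_2|^{1-s/2}$ with $C''=C''(d,s,L,\Lambda,\eps,M)$. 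Interchanging $x_1$ and $x_2$ gives the reverse inequality, hence $|\MA u(x_1)-\MA u(x_2)|\le C''|x_1-x_2|^{1-s/2}$ for all $x_1,x_2\in V$, i.e.\ $\MA u\in C^{1-s/2}$ near $x_0$.

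I expect the main obstacle to be the geometric uniformity step of the second paragraph: Lemma~\ref{l:MA-pre-holder} is attached to a single base point $x_0$, with its error constant and the data $\Lambda,\eps$ all referring to that point, so the real work is to show that the sections $D_{x_1}u(\eps/2)$ stay uniformly bounded (and that the smallness conditions $2\Lambda|\grad u(x_1)-\grad u(x_0)|<\eps/2$ and $|x_0-x_1|\le\Lambda$ remain valid) as $x_1$ ranges over a neighborhood. Everything after that — converting $|\grad u(x_1)-\grad u(x_2)|$ to $|x_1-x_2|$ and absorbing the resulting linear term into the $|x_1-x_2|^{1-s/2}$ term — is routine and works precisely because $1-s/2<1$.
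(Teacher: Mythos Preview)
Your proposal is correct and follows the same route as the paper: apply Lemma~\ref{l:MA-pre-holder} and convert the gradient modulus into $|x_1-x_0|$ via the $C^{1,1}$ hypothesis. The paper's proof is essentially a one-line sketch of exactly this, while you have correctly identified and carried out the details that the sketch suppresses --- in particular the uniform bound on $\mathrm{diam}\,D_{x_1}u(\eps/2)$ for nearby base points $x_1$, which is indeed needed to symmetrize the one-sided estimate of Lemma~\ref{l:MA-pre-holder} and obtain genuine $C^{1-s/2}$ regularity on a neighborhood rather than just a one-sided bound at the single point $x_0$.
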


\begin{proof}
This follows simply by applying Lemma \ref{l:MA-pre-holder} plus the fact that since $u \in C^{1,1}$, $|\grad u(x_0) - \grad u(x_1)| \leq C |x_1-x_0|$.
\end{proof}





\section{A concrete problem}

\label{s:problem}

We consider the following problem in the full space $\R^d$. Given a smooth, strictly convex function $\varphi :\R^d \to \R$, we look for a function $u$ which solves the equation
\begin{equation} \label{e:problem} 
\MA u = u-\varphi \qquad \text{in } \R^d.
\end{equation}
We will also assume that $\varphi$ behaves asymptotically as a cone at infinity. More precisely
\[ \lim_{R \to \infty} \frac{\varphi(Rx)}R = \Phi(x),\]
where $\Phi$ is a homogeneous function of degree one which is smooth away from the origin.

We will prove that the equation \eqref{e:problem} has a unique solution $u$ which converges to $\varphi$ at infinity. Moreover, this solution is $C^{1,1}$. This is one the the simplest model problems for $\MA$ and we use it as an example of the solvability of an equation involving $\MA$.

We start with the comparison principle for this equation.

\begin{prop} \label{p:comparison}
Let $u$ and $v$ be two continuous functions in $\R^d$ satisfying, for some $\Omega \subset \R^d$ open and bounded,
\begin{align*}
\MA u - u &\geq f \geq \MA v - v \qquad \text{(pointwise)}\\
u &\leq v \qquad \text{in } \R^d \setminus \Omega
\end{align*}
Then $u \leq v$ in $\R^d$
\end{prop}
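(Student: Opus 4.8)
The standard strategy for a comparison principle is to argue by contradiction: suppose $\sup_{\R^d}(u-v) = M > 0$. Since $u \le v$ outside $\Omega$ and $\Omega$ is bounded, this supremum is attained at some point $x_0 \in \overline\Omega$, in fact at an interior point of $\Omega$ (because $u - v \le 0$ on the boundary and outside). At $x_0$ the function $u - v$ has a global maximum, so $v(x) \ge u(x) - M$ for all $x$, with equality at $x_0$. The translated/shifted function $\tilde v := v + M$ then satisfies $\tilde v \ge u$ everywhere and $\tilde v(x_0) = u(x_0)$. By the monotonicity lemma (Lemma \ref{l:monotonicity}), applied with the roles arranged so that $\tilde v$ lies above $u$ and they touch at $x_0$, we get $\MA \tilde v(x_0) \le \MA u(x_0)$. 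Since $\MA$ is translation-invariant in the target (adding a constant does not change second-order-type differences), $\MA \tilde v(x_0) = \MA v(x_0)$, hence $\MA v(x_0) \le \MA u(x_0)$.

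Now I would combine this with the differential inequalities. From the hypotheses, $\MA u(x_0) - u(x_0) \ge f(x_0)$ and $\MA v(x_0) - v(x_0) \le f(x_0)$, so
\[
\MA u(x_0) - u(x_0) \ge \MA v(x_0) - v(x_0).
\]
Rearranging and using $\MA v(x_0) \le \MA u(x_0)$ gives
\[
u(x_0) - v(x_0) \le \MA u(x_0) - \MA v(x_0) \le 0,
\]
which would be more transparent written as: $v(x_0) - u(x_0) \ge \MA v(x_0) - \MA u(x_0) \ge 0$ is false since $u(x_0) - v(x_0) = M > 0$. Thus $u(x_0) - v(x_0) \le 0$, contradicting $u(x_0) - v(x_0) = M > 0$. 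Therefore $M \le 0$, i.e. $u \le v$ in $\R^d$.

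The main obstacle is the step invoking Lemma \ref{l:monotonicity}: that lemma is stated for $u \ge v$ touching from above, giving $\MA u \ge \MA v$, so I must be careful to apply it in the correct direction — here $\tilde v \ge u$ with contact at $x_0$ yields $\MA \tilde v(x_0) \ge \MA u(x_0)$, not $\le$. Let me redo the sign bookkeeping: with $w := \tilde v$ playing the role of the upper function, Lemma \ref{l:monotonicity} gives $\MA w(x_0) \ge \MA u(x_0)$, i.e. $\MA v(x_0) \ge \MA u(x_0)$. Then $\MA u(x_0) - u(x_0) \ge f(x_0) \ge \MA v(x_0) - v(x_0) \ge \MA u(x_0) - v(x_0)$, whence $-u(x_0) \ge -v(x_0)$, i.e. $u(x_0) \le v(x_0)$, contradicting $M>0$. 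The only genuinely delicate point beyond sign-tracking is confirming that the maximum of $u-v$ is actually attained (which follows from continuity of both functions together with $u \le v$ on the complement of the bounded set $\Omega$) and that it lies in the open set where the pointwise inequalities are assumed to hold; both are immediate from the hypotheses. A secondary point worth a line is translation invariance of $\MA$ under adding constants, which is clear from Definition \ref{d:pointwise} since the integrand $u(x+y) - u(x) - y\cdot b$ is unchanged.
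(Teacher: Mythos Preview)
Your argument is correct and follows exactly the paper's approach: argue by contradiction, locate the positive maximum of $u-v$ at a point $x_0\in\Omega$, apply Lemma \ref{l:monotonicity} to $u$ and $v + (u(x_0)-v(x_0))$ to obtain $\MA v(x_0)\ge \MA u(x_0)$, and combine this with the two differential inequalities to contradict $u(x_0)>v(x_0)$. Your final, corrected sign bookkeeping is right, and the two auxiliary observations you spell out (that the maximum is attained in $\Omega$, and that $\MA$ is unchanged under addition of a constant) are indeed routine from the hypotheses and Definition \ref{d:pointwise}.
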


\begin{proof} Assume the contrary. Then $u-v$ takes a positive absolute maximum at some point $x_0 \in \Omega$. From Lemma \ref{l:monotonicity} (applied to $u$ and $v+u(x_0)-v(x_0)$), we know that $\MA v(x_0) \geq \MA u(x_0)$. But this contradics the equation since we get
\[ \MA v(x_0) - v(x_0) < \MA u(x_0) + u(x_0).\]
\end{proof}

In order to prove the existence of a solution, we will carry out a Perron's method approach. That is, we will consider the maximum of all sub-solutions and prove it is a solution. The first step in this method is to identify a particular sub-solution and a particular super-solution with the right behaviour at infinity. That is the purpose of the following two lemmas.

\begin{lemma} [The lower barrier]
The function $\varphi$ is a sub-solution of \eqref{e:problem}.
\end{lemma}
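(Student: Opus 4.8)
The plan is to show directly that $\varphi$ satisfies $\MA \varphi \geq \varphi - \varphi = 0$ pointwise in $\R^d$, which exhibits $\varphi$ as a subsolution of \eqref{e:problem}. Since $\varphi$ is smooth and strictly convex, at any point $x$ the subdifferential is the singleton $\{\grad\varphi(x)\}$ and $\tilde\varphi(y) = \varphi(x+y)-\varphi(x)-y\cdot\grad\varphi(x) \geq 0$ for all $y$, with strict inequality for $y \neq 0$ by strict convexity. Hence every integrand $(\varphi(x+y)-\varphi(x)-y\cdot\grad\varphi(x))K(y)$ is nonnegative, so every admissible integral in Definition \ref{d:monsterMA} is $\geq 0$, and therefore the infimum $\MA\varphi(x) \geq 0 = \varphi(x)-\varphi(x)$. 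This is essentially the same observation used in Proposition \ref{p:levelsets}.

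The one point that needs a word of care is that $\MA\varphi(x)$ is actually well defined (i.e., the infimum is not being taken over an empty family in a degenerate way, and there is at least one admissible kernel giving a finite value, so that $\MA\varphi(x)$ is a genuine real number rather than $+\infty$, though $+\infty$ would still be fine for a subsolution). For this I would invoke Proposition \ref{p:levelsets}: it suffices to know that $\varphi(y) \leq C(1+|y|^{s-\delta})$ for some $\delta>0$. But this growth bound is \emph{false} for the $\varphi$ in this problem, since $\varphi$ grows linearly (like the cone $\Phi$) at infinity, and $s \in (1,2)$ means $s-\delta$ can be taken in $(1,2)$ as well, so in fact $\varphi(y)\leq C(1+|y|^{s-\delta})$ does hold once we pick $\delta < s-1$; linear growth is dominated by $|y|^{s-\delta}$ for $s-\delta > 1$. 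So Proposition \ref{p:levelsets} applies (using that $\varphi$ is smooth, hence $C^{1,1}$ locally, and has a supporting plane at every point by convexity), giving $\MA\varphi(x) \in \R$ and in particular $\MA\varphi(x) \geq 0$.

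Putting these together: for every $x \in \R^d$,
\[
\MA\varphi(x) - \varphi(x) \geq 0 - \varphi(x) = -\varphi(x) = (u-\varphi)\big|_{u=\varphi}(x) - \text{(itself)},
\]
which is not quite the cleanest way to phrase it; more simply, $\MA\varphi(x) \geq 0 = \varphi(x) - \varphi(x)$, i.e.\ $\MA\varphi - \varphi \geq -\varphi$... the point is that $\MA\varphi \geq 0$ and the right-hand side of \eqref{e:problem} evaluated at $u=\varphi$ is $\varphi-\varphi = 0$, so $\MA\varphi \geq \varphi - \varphi$, meaning $\varphi$ is a subsolution. I do not anticipate a real obstacle here; the entire content is the nonnegativity of $\tilde\varphi$ from convexity together with the finiteness from Proposition \ref{p:levelsets}, and the only mild subtlety is checking that the linear growth of $\varphi$ is compatible with the $|y|^{s-\delta}$ hypothesis, which it is precisely because $s > 1$.
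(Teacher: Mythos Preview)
Your argument is correct and matches the paper's own proof: the paper simply says ``Since $\varphi$ is convex, then $\MA \varphi \geq 0$. Thus, $\MA \varphi \geq \varphi - \varphi$ and $\varphi$ is a sub-solution.'' You have added the extra (and correct) verification that $\MA\varphi(x)$ is a finite real number via Proposition~\ref{p:levelsets} and the observation that linear growth is dominated by $|y|^{s-\delta}$ for $\delta<s-1$; the paper leaves this implicit.
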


\begin{proof}
Since $\varphi$ is convex, then $\MA \varphi \geq 0$. Thus, $\MA \varphi \geq \varphi - \varphi$ and $\varphi$ is a sub-solution.
\end{proof}

\begin{lemma} [The upper barrier]
There exists a function $w$ such that $w(x) \leq C (1+|x|)^{1-s}$ for some constant $C$ and $\varphi+w$ is a super-solution of \eqref{e:problem}.

Note that $1-s < 0$ since we are under the hypothesis that $s \in (1,2)$.
\end{lemma}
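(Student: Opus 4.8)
The plan is to build $w$ as a small radial perturbation and show that adding it to $\varphi$ pushes $\MA$ down enough to turn $\varphi$ into a supersolution. The natural candidate is $w(x) = -C(1+|x|)^{1-s}$ (or $w(x) = -C + C(1+|x|)^{1-s}$, normalized so that $w \le 0$ decays like $(1+|x|)^{1-s}$); note $1-s<0$, so $w$ is bounded and decaying. Since $\varphi + w$ must satisfy $\MA(\varphi+w) \le (\varphi+w) - \varphi = w$, and $w < 0$ away from a compact set, the key point is a \emph{quantitative upper bound} $\MA(\varphi+w)(x) \le w(x)$ for $C$ large. Because $\MA$ is an infimum of integro-differential operators, it suffices to exhibit, at each point $x$, one admissible kernel $K$ for which the integral is $\le w(x)$.

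First I would record that $\varphi+w$ is still convex for $C$ small relative to the strict convexity of $\varphi$ (on compact sets this is automatic from $D^2\varphi \ge \delta I$ and the boundedness of $D^2 w$; at infinity $\varphi$ is asymptotically conical so one must check $D^2(\varphi+w)\ge 0$ holds there too — this uses that $w$ and its derivatives decay). Then, for the kernel, I would use the standard fractional-Laplacian kernel $K(y)=|y|^{-d-s}$ and estimate
\[
\int_{\R^d}\big((\varphi+w)(x+y)-(\varphi+w)(x)-y\cdot\grad(\varphi+w)(x)\big)\,\frac{\dd y}{|y|^{d+s}}
= -c_{d,s}(-\lap)^{s/2}(\varphi+w)(x).
\]
This splits as $\MA$-type contributions from $\varphi$ and from $w$; the $\varphi$ part is bounded (using that $\varphi$ is smooth with controlled second derivatives and conical growth, so the integral converges and is $O(1)$), while the $w$ part, $-c_{d,s}(-\lap)^{s/2}w(x)$, is where the gain comes from. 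The main computation is thus the sign and size of $(-\lap)^{s/2}$ of the profile $(1+|x|)^{1-s}$: one expects $(-\lap)^{s/2}[(1+|x|)^{1-s}](x) \ge c\,(1+|x|)^{-1-s}$ or at least $\ge c\,(1+|x|)^{1-s}\cdot(1+|x|)^{-2}$ type lower bound — in any case strictly positive and decaying — so that $-c_{d,s}(-\lap)^{s/2}w(x) = C\,c_{d,s}(-\lap)^{s/2}[(1+|x|)^{1-s}](x)$ is a negative quantity of order at least $-cC(1+|x|)^{-1-s}$, comparable to (and, for $C$ large, dominating in absolute value) both the $O(1)$ error from $\varphi$ and the target $w(x)\sim -C(1+|x|)^{1-s}$ on the region where $(1+|x|)\ge R_0$. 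On the compact region $|x|\le R_0$ one simply takes $C$ large enough that $\MA\varphi(x) + (\text{bounded }w\text{-contribution}) \le w(x)$, which is possible since $\MA\varphi$ is bounded there (Proposition~\ref{p:levelsets}) and $w(x)$ can be made as negative as desired by increasing $C$ — wait, $w$ is bounded, so instead one uses that $\MA(\varphi+w)$ can be made small by the negativity of $-(-\lap)^{s/2}w$, choosing $C$ so the fractional-Laplacian gain beats the fixed $\MA\varphi$ bound.

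The hard part will be the fractional-Laplacian estimate on the non-smooth, only-Lipschitz-at-infinity profile and making the two regimes (near field, where $\varphi$'s strict convexity dominates, and far field, where $\varphi$ is nearly a cone and $(-\lap)^{s/2}$ of a cone is homogeneous of degree $-s$) match up with a single constant $C$. A clean way to organize this is to prove the pointwise bound $\MA(\varphi+w)(x) \le w(x)$ by scaling: at scale $|x|=R\gg1$, rescale $\varphi_R(y)=\varphi(Ry)/R \to \Phi(y)$, so $\MA\varphi_R(0)$ is comparable to $\MA\Phi$ which is a positive constant, translating back to $\MA\varphi(x) \lesssim R^{-s}\cdot(\text{stuff})$; the perturbation $w$ at that scale contributes a fixed negative amount once $C$ is chosen, and one checks the inequality survives the rescaling. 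I would present the near-field case first (direct, using $\MA\varphi$ bounded and convexity), then the far-field case via this rescaling/homogeneity argument, and finally fix $C$ to cover both.
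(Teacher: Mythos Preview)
Your proposal has a sign error that breaks the argument. You take $w \le 0$, but the upper barrier must sit \emph{above} the lower barrier $\varphi$, so one needs $w \ge 0$ (this is how it is used afterwards: admissible subsolutions are trapped between $\varphi$ and $\varphi+w$). With your sign, the supersolution inequality $\MA(\varphi+w)\le w$ is impossible on the region where you also insist $\varphi+w$ is convex: convexity forces $\MA(\varphi+w)\ge 0$, while $w<0$ there. You cannot escape by dropping convexity either, since then $\MA(\varphi+w)=-\infty$ at some points and the barrier is useless.

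There is a second, independent problem in your far-field balance. You correctly use that the fractional Laplacian of the profile $(1+|x|)^{1-s}$ decays like $(1+|x|)^{1-2s}$ (your ``$(1+|x|)^{-1-s}$'' is the same order). But that is \emph{strictly lower order} than both $w\sim (1+|x|)^{1-s}$ and $-(-\lap)^{s/2}\varphi\sim |x|^{1-s}$, so the $w$-contribution never ``dominates'' the $\varphi$-contribution at infinity; at leading order the inequality reduces to comparing $a(x/|x|)\,|x|^{1-s}$ with $C\,|x|^{1-s}$, and with your sign of $w$ this reads $a|x|^{1-s}\le -C|x|^{1-s}$, which is false.

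The paper avoids both issues in one stroke. It uses only the trivial bound $\MA v\le -(-\lap)^{s/2}v$ (the fractional Laplacian kernel is one admissible $K$) and then \emph{defines} $w$ so that the linear equation $-(-\lap)^{s/2}(\varphi+w)=w$ holds exactly, namely
\[
w:=-\bigl(I+(-\lap)^{s/2}\bigr)^{-1}(-\lap)^{s/2}\varphi.
\]
This gives $\MA(\varphi+w)\le -(-\lap)^{s/2}(\varphi+w)=w=(\varphi+w)-\varphi$ with no estimates needed for the supersolution step. Positivity of $w$ follows because $(-\lap)^{s/2}\varphi\le 0$ for convex $\varphi$ and the resolvent kernel is nonnegative; the decay $w(x)\le C(1+|x|)^{1-s}$ comes from $(-\lap)^{s/2}\varphi(x)\approx c|x|^{1-s}$ (by the asymptotic cone behavior) convolved with the integrable resolvent kernel. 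No convexity of $\varphi+w$, no competing choices of $C$, and no delicate matching of near/far regimes are required.
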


\begin{proof}
The key observation for this lemma is that $-(-\lap)^{s/2} v(x) \geq \MA v(x)$ for any function $v$. This is simply because the fractional Laplacian corresponds to the kernel $K(y) = |y|^{-d-s}$ in \eqref{e:monsterma}. 

Let $w$ be the function $w := -(I + (-\lap)^{s/2})^{-1} (-\lap)^{s/2} \varphi$. Note that for this function $w$, 
\[ \MA (\varphi + w) \leq -(-\Delta)^{s/2} (\varphi + w) = -(-\lap)^{s/2} \varphi + ((-\lap)^{s/2} \varphi + w) = (\varphi+w) - \varphi.\]
Thus, $\varphi+w$ is indeed a supersolution. We are left to justify that $w$ satisfies the right bounds.

Since $\varphi$ is smooth, $(-\lap)^{s/2} \varphi$ is bounded. Moreover, since $\varphi$ is asymptotically homogeneous of degree one at infinity, we have from homogeneity that $(-\lap)^{s/2} \varphi(x) \approx C |x|^{1-s}$ for large values of $|x|$.

The operator $(I+(-\lap)^{s/2})^{-1}$ is a convolution operator with a kernel $K$ such that $|K(y)| \approx |y|^{-d+s}$ for small values of $|y|$ and $|K(y)| \approx |y|^{-d-s}$ for large values of $|y|$. In particular, $K$ is integrable and it is not hard to show that $K \ast (-\lap)^{s/2} \varphi$ is bounded and decays like $|y|^{1-s}$ for large values of $|y|$.
\end{proof}

The previous two lemmas provide us with a lower bound $\varphi$ and an upper bound $\varphi+w$ for the solution $u$ to \eqref{e:problem}. We will construct $u$ as the maximum of all sub-solutions $v$ of \eqref{e:problem} so that $\varphi+w \geq v \geq \varphi$. This is a nonempty class of functions, since $v=\varphi$ is admissible. Moreover, all these sub-solutions are convex from Proposition \ref{p:hastobeconvex}. In particular, all these sub-solutions $v$ are uniformly Lipschitz. Thus, we define
\begin{equation} \label{e:Perron}
 u(x) = \sup \{ v(x): \varphi \leq v \leq \varphi+w \text{ and } \MA v \geq v-\varphi \}. 
\end{equation}

A supremum of convex functions is also convex, therefore $u$ is a convex function such that $\varphi \leq u \leq \varphi+w$. In particular, $u$ is also Lipschitz.

Before proving that $u$ is indeed a solution of \eqref{e:problem}, we will prove a few simple properties of $u$.

\begin{lemma} \label{l:bounde-flat-parts}
Let $\ell$ be any affine function. Then the set $\{ u = \ell\}$ is compact.
\end{lemma}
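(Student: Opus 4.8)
The plan is to argue by contradiction, exploiting the two pillars of the construction: on the one hand, $u$ is squeezed between $\varphi$ and $\varphi + w$, and $w$ decays like $|x|^{1-s} \to 0$; on the other hand, $\varphi$ is strictly convex and asymptotically conical, so $\varphi$ itself cannot agree with any affine function on an unbounded set. Concretely, suppose $\{u = \ell\}$ is not compact for some affine $\ell$. Since $u$ is convex and continuous, $\{u = \ell\}$ is closed and convex, so being noncompact it must contain a ray: there is a point $x_0$ and a unit vector $e$ with $u(x_0 + t e) = \ell(x_0 + te)$ for all $t \geq 0$. In particular $\ell \leq u$ everywhere (since $\ell$ is a supporting plane of the convex function $u$ along this ray, hence globally — here I would note that a convex function lying above an affine function on a line and touching it there lies above it everywhere, because the difference $u - \ell$ is convex, nonnegative at two points of the line arbitrarily far apart, hence... actually more carefully: $u - \ell$ is convex and vanishes on a whole ray, so its restriction to that ray is the zero convex function, and since $u-\ell \geq 0$ would need to be checked — instead I would use that $\ell$ must be a supporting plane, i.e. $\ell \in \subdiff u$ in the sense of Definition \ref{d:supportingplane} applied along the ray).

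The key step is then the following squeeze. Along the ray, $\varphi(x_0 + te) \leq u(x_0 + te) = \ell(x_0 + te) \leq u(x_0+te) \leq \varphi(x_0+te) + w(x_0+te)$, so
\begin{equation} \label{e:ray-squeeze}
0 \leq \ell(x_0 + te) - \varphi(x_0 + te) \leq w(x_0 + te) \leq C(1 + |x_0 + te|)^{1-s} \To 0 \quad \text{as } t \to \infty.
\end{equation}
Thus $\varphi(x_0 + te) - \ell(x_0 + te) \to 0$ from below as $t \to \infty$. But $\varphi$ is smooth and strictly convex and is asymptotically homogeneous of degree one: $\varphi(Rx)/R \to \Phi(x)$ with $\Phi$ one-homogeneous and smooth away from the origin. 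Strict convexity of $\varphi$ forces $\varphi$ to lie strictly above any of its supporting planes away from the contact point, and more quantitatively the asymptotic cone $\Phi$ has the property that for the direction $e$, $\varphi(x_0+te) - \ell(x_0+te)$ grows linearly unless the linear part of $\ell$ equals the "slope of the cone in direction $e$". Even in that borderline case, I would show that strict convexity of $\varphi$ (equivalently $D^2\varphi > 0$ on compacts, together with the conical behavior ruling out flatness at infinity) prevents $\varphi - \ell$ from tending to $0$: if $\varphi(x_0+te)-\ell(x_0+te)$ were bounded and monotone, its derivative $e\cdot(\grad\varphi(x_0+te) - a) \to 0$, but then by strict convexity $\grad\varphi(x_0+te)$ would converge, contradicting either strict convexity along the ray or the asymptotic behavior. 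This contradicts \eqref{e:ray-squeeze}, so no such ray exists and $\{u=\ell\}$ is bounded; being also closed, it is compact.

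The main obstacle I anticipate is making the last paragraph rigorous, i.e. extracting from "strictly convex plus asymptotically conical" the precise statement that $\varphi$ minus an affine function cannot stay bounded on a ray. The cleanest route is probably: if $\varphi - \ell$ is bounded on the ray $\{x_0 + te : t \geq 0\}$, then since $\varphi-\ell$ is convex in $t$ and bounded, it is nonincreasing or eventually constant-like; its one-sided derivative $\frac{d}{dt}(\varphi - \ell)(x_0+te)$ is nondecreasing and bounded above (else the function would go to $+\infty$), so it converges to some limit $\leq 0$; then $e \cdot \grad\varphi(x_0 + te)$ converges. Comparing with the asymptotic relation $\grad\varphi(Rx) \to \grad\Phi(x)$ (valid for $x \neq 0$ by smoothness of $\Phi$ away from $0$ and standard arguments upgrading the $C^0$ convergence $\varphi(Rx)/R \to \Phi(x)$ to $C^1$ on compact subsets of $\R^d\setminus\{0\}$ using convexity), we get $e\cdot \grad\varphi(x_0+te) \to e \cdot \grad\Phi(e)$. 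Since $\Phi$ is one-homogeneous, $e\cdot\grad\Phi(e) = \Phi(e)$, and then one checks $\varphi(x_0+te) - t\Phi(e)$ stays bounded forces, via strict convexity on every compact slab, a contradiction — or, more simply, one invokes that a strictly convex function cannot be affine on a ray, and $\varphi - \ell$ being bounded and convex on a ray with derivative tending to a limit makes it asymptotically affine on that ray, hence $\varphi$ restricted to the ray is asymptotically affine, contradicting strict convexity in a quantitative form on that ray. I would spell out whichever of these is shortest given the regularity already assumed on $\varphi$ and $\Phi$.
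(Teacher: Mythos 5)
Your overall strategy is the right one and in the same spirit as the paper's: both arguments rest on the squeeze $\varphi \leq u \leq \varphi + w$ with $w \to 0$ at infinity, applied on an unbounded piece of $\{u = \ell\}$, and conclude by playing this against strict convexity of $\varphi$. (The paper phrases it in terms of $\ell$ having to be a supporting plane of the asymptotic cone $\Phi$ and then invokes the inequality $\Phi < \varphi$; you stay on the ray and pass to the limit $t \to \infty$ directly. Your route is arguably more transparent.) You do correctly obtain the key relation \eqref{e:ray-squeeze}: $0 \leq \ell - \varphi \leq w \to 0$ along the ray.

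There is, however, a genuine gap in the final step, and the detour through $\grad\Phi$ is both unnecessary and ultimately circular. You conclude by saying that $\varphi$ is ``asymptotically affine'' on the ray, and that this contradicts strict convexity ``in a quantitative form.'' That implication is false: $t \mapsto \sqrt{1+t^2}$ is strictly convex yet asymptotically affine, so being asymptotically affine on a ray is perfectly compatible with strict convexity. No amount of local quantitative convexity rules it out, because strict convexity gives no uniform lower bound on $D^2\varphi$ along an unbounded ray. What you actually need — and in fact already have from \eqref{e:ray-squeeze} — is the following elementary observation, which closes the argument cleanly: the function $g(t) := \ell(x_0+te) - \varphi(x_0+te)$ is concave on $[0,\infty)$ (as $\ell$ is affine and $\varphi$ convex), nonnegative, and tends to $0$. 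A concave function on $[0,\infty)$ that is bounded below cannot ever have a negative one-sided derivative (else it would tend to $-\infty$), so $g$ is nondecreasing; being nondecreasing with limit $0$ forces $g \leq 0$; combined with $g \geq 0$ this gives $g \equiv 0$. Hence $\varphi$ coincides \emph{exactly} with the affine function $\ell$ on the ray, and it is \emph{this} that contradicts strict convexity. No information about $\Phi$, $\grad\Phi$, or the $C^1$ upgrade of the cone convergence is needed once you have \eqref{e:ray-squeeze}.

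A secondary point you raise but do not resolve: to extract a ray from noncompactness of $\{u = \ell\}$ you use convexity of that set, which requires $\ell$ to be a supporting plane of $u$ (otherwise $\{u = \ell\}$ is merely a level set and need not be convex; think of $\partial\{y \geq x^2\}$, an unbounded level set with no ray). This is easy to fix — for instance, pick $p_0 \in \{u=\ell\}$ and $p_n \in \{u=\ell\}$ with $|p_n|\to\infty$ and $p_n/|p_n| \to e$; by convexity $u - \ell \leq 0$ on each segment $[p_0,p_n]$, and passing to the limit gives $\varphi \leq u \leq \ell$ on the full ray $\{p_0 + te\}$, together with $\ell(p_n) - \varphi(p_n) \leq w(p_n) \to 0$, from which the same concavity argument applies — but as written the reduction to the supporting-plane case is left hanging.
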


\begin{proof}
Since the set is clearly closed, we must prove it is bounded. Assume it was not. Since $\varphi \leq u \leq \varphi + w$ and both the lower and upper bound converge to the cone $\Phi$ at infinity, then necessarily $\ell$ must be one of the supporting planes of $\Phi$. However, that implies that $\ell \leq \Phi < \varphi$ because of the strict convexity of $\varphi$, which contradicts that $\ell = u \geq \varphi$ at some point.
\end{proof}

\begin{lemma} \label{l:c11}
The function $u$ is $C^{1,1}(\R^d)$ and
\[ \|u\|_{C^{1,1}} \leq \|\varphi\|_{C^{1,1}}. \]
\end{lemma}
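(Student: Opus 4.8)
The plan is to exploit the maximality of $u$ in the Perron family \eqref{e:Perron}, together with the concavity of $\MA$ (Lemma~\ref{l:concavity}), its monotonicity (Lemma~\ref{l:monotonicity}), and the comparison principle (Proposition~\ref{p:comparison}). Set $\Lambda:=\|\varphi\|_{C^{1,1}}$, so that $D^2\varphi\le\Lambda I$ and hence $\tfrac12\bigl(\varphi(x+h)+\varphi(x-h)\bigr)\le\varphi(x)+\tfrac{\Lambda}{2}|h|^2$ for all $x,h\in\R^d$. I want to show that $x\mapsto\tfrac{\Lambda}{2}|x|^2-u(x)$ is convex; since $u$ is also convex this yields $0\le D^2u\le\Lambda I$, i.e.\ $u\in C^{1,1}(\R^d)$ with $\|u\|_{C^{1,1}}\le\Lambda$. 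By continuity it is enough to prove midpoint convexity of $\tfrac{\Lambda}{2}|x|^2-u$, which (after the substitution $x=\tfrac{y+z}2$, $h=\tfrac{y-z}2$) is equivalent to
\[ \frac{u(x+h)+u(x-h)}{2}-\frac{\Lambda}{2}|h|^2\ \le\ u(x)\qquad\text{for all }x,h\in\R^d. \]

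I would first record two facts. (i) The admissible class $\mathcal A:=\{v:\varphi\le v\le\varphi+w,\ \MA v\ge v-\varphi\}$ appearing in \eqref{e:Perron} is closed under the maximum of two of its members: if $v_1,v_2\in\mathcal A$ and $\bar v:=\max(v_1,v_2)$, then $\varphi\le\bar v\le\varphi+w$, $\bar v$ is convex and continuous, and at any $x_0$, say with $\bar v(x_0)=v_1(x_0)$, Lemma~\ref{l:monotonicity} applied to the pair $\bar v\ge v_1$ gives $\MA\bar v(x_0)\ge\MA v_1(x_0)\ge v_1(x_0)-\varphi(x_0)=\bar v(x_0)-\varphi(x_0)$ (and symmetrically otherwise). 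Consequently, for any $y,z\in\R^d$ and $\eps>0$ there is a \emph{single} $v\in\mathcal A$ with $v(y)\ge u(y)-\eps$ and $v(z)\ge u(z)-\eps$, obtained as the maximum of two near-optimal competitors. (ii) $\MA$ is invariant under translations and under the addition of affine functions, directly from its definition.

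Now fix $h\in\R^d$ and $v\in\mathcal A$, and put
\[ v_h(x):=\frac{v(x+h)+v(x-h)}{2}-\frac{\Lambda}{2}|h|^2. \]
First, $v_h$ is a subsolution: by (ii), $\MA v_h(x)=\MA\bigl[\tfrac12\bigl(v(\cdot+h)+v(\cdot-h)\bigr)\bigr](x)$, and applying Lemma~\ref{l:concavity} to $v(\cdot+h)$ and $v(\cdot-h)$, then the subsolution inequality for $v$ and the bound on $\tfrac12(\varphi(x+h)+\varphi(x-h))$ above,
\[ \MA v_h(x)\ \ge\ \frac{\MA v(x+h)+\MA v(x-h)}{2}\ \ge\ \frac{(v-\varphi)(x+h)+(v-\varphi)(x-h)}{2}\ \ge\ v_h(x)-\varphi(x). \]
Second, $v_h$ is convex (an average of translates of the convex function $v$, minus a constant), so $\bar v:=\max(v,v_h)$ is convex, is a subsolution by (i), and satisfies $\bar v\ge v\ge\varphi$. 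Finally, $\bar v\le\varphi+w$ outside a large ball: since $\varphi$ is smooth and asymptotically homogeneous of degree one, $D^2\varphi(x)\to0$ as $|x|\to\infty$, hence $\varphi(x+h)+\varphi(x-h)-2\varphi(x)\to0$; together with $w(x)\to0$ this makes $(\varphi+w)(x+h)+(\varphi+w)(x-h)-2(\varphi+w)(x)\to0<\Lambda|h|^2$, so for $|x|\ge R(h)$ one gets $v_h(x)\le(\varphi+w)(x)$, and $v(x)\le(\varphi+w)(x)$ holds always. The upper barrier lemma says $\varphi+w$ is a supersolution, so Proposition~\ref{p:comparison} with $\Omega=B_{R(h)}$ upgrades this to $\bar v\le\varphi+w$ on all of $\R^d$. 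Thus $\bar v\in\mathcal A$, and by the maximality defining $u$ we get $\bar v\le u$, hence in particular
\[ \frac{v(x+h)+v(x-h)}{2}-\frac{\Lambda}{2}|h|^2\ \le\ u(x)\qquad\text{for all }x\in\R^d. \]

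To finish, fix $y,z\in\R^d$ and put $x=\tfrac{y+z}2$, $h=\tfrac{y-z}2$; choosing $v\in\mathcal A$ as in (i) with $v(y)+v(z)\ge u(y)+u(z)-2\eps$ and using the last display, $\tfrac12(u(y)+u(z))-\eps-\tfrac{\Lambda}{8}|y-z|^2\le u\bigl(\tfrac{y+z}2\bigr)$; letting $\eps\to0$ gives the required midpoint convexity of $\tfrac{\Lambda}{2}|x|^2-u$, and hence $u\in C^{1,1}(\R^d)$ with $\|u\|_{C^{1,1}}\le\|\varphi\|_{C^{1,1}}$. The main obstacle is the third step: keeping $\max(v,v_h)$ inside $\mathcal A$. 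The subsolution property of $v_h$ rests on the concavity and invariance of $\MA$; the genuinely delicate point is the upper constraint $\le\varphi+w$, which one first verifies near infinity (using the decay of the second differences of $\varphi+w$) and then propagates inward via the comparison principle — and it is exactly this mechanism that produces the sharp constant $\|\varphi\|_{C^{1,1}}$, rather than merely some constant.
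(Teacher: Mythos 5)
Your overall strategy coincides with the paper's: show the midpoint inequality $\tfrac12\bigl(u(x+h)+u(x-h)\bigr)-\tfrac{\Lambda}{2}|h|^2\le u(x)$ by checking that the translate-average $v_h$ of an admissible $v$ is again a subsolution (via the concavity and translation invariance of $\MA$), pushing it back into the Perron class via the comparison principle, and then approximating $u$ by maxima of near-optimal admissible competitors. This is exactly the route the paper takes.

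There is, however, a genuine gap in your verification that the competitor stays below the upper barrier $\varphi+w$. You assert that, because $\varphi$ is smooth and asymptotically a cone, $D^2\varphi(x)\to0$ as $|x|\to\infty$, hence $\varphi(x+h)+\varphi(x-h)-2\varphi(x)\to0$, so the second differences of $\varphi+w$ eventually drop strictly below $\Lambda|h|^2$ and $v_h\le\varphi+w$ near infinity. But the hypothesis is only the pointwise blowdown convergence $\varphi(Rx)/R\to\Phi(x)$; this does not by itself yield pointwise decay of the Hessian, and nothing in the paper establishes (or uses) $D^2\varphi\to0$. Without that, the best one gets from $\varphi\le v\le\varphi+w$ and $D^2\varphi\le\Lambda I$ is
\[
v_h(x)-(\varphi+w)(x)\ \le\ \tfrac12\bigl(w(x+h)+w(x-h)\bigr)-w(x),
\]
which tends to $0$ but need not be $\le 0$, so $v_h\le\varphi+w$ outside a ball is not guaranteed and your comparison step does not yet launch. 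The paper sidesteps this cleanly with an $\eps$-truncation: it shows $v_h(x)-\varphi(x)\le\tfrac12\bigl(w(x+h)+w(x-h)\bigr)\to0$, hence $v_h(x)-\eps<\varphi(x)$ for $|x|$ large, and replaces $v_h$ by $\tilde v:=\max(v_h-\eps,\varphi)$, which equals $\varphi\le\varphi+w$ near infinity, is still a subsolution, and therefore lies below $\varphi+w$ everywhere by the comparison principle; sending $\eps\to0$ then yields $v_h\le u$. If you insert this $\eps$-adjustment in place of your $D^2\varphi\to0$ claim (or, alternatively, prove that decay of the Hessian does follow from the asymptotic-cone hypothesis, which would be a separate lemma), the rest of your argument goes through and matches the paper's.
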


\begin{proof}
As we pointed out above, the function $u$ is convex, which means that $D^2 u \geq 0$. We are left to prove that $D^2 u \leq \|\varphi\|_{C^{1,1}}$.

Since $\varphi$ is smooth, let $K = \max |D^2 \varphi|$. Thus, for any $x,y \in \R^d$, we have
\[ \varphi(x+y) + \varphi(x-y) - K |y|^2 \leq 2\varphi(x).\]

Let $v$ be any sub-solution of \eqref{e:problem} so that $\varphi \leq v \leq \varphi+w$. That is, $v$ is any of the admissible subsolutions used in Perron's method to obtain $u$ as its minimum.

From Lemma \ref{l:concavity} and the homogeneity of $\MA$ we obtain the following (all operators are applied with respect to $x$, with $y$ fixed),
\[ 
\begin{aligned}
\MA\bigg[ & \frac{v(x+y)+v(x-y) - K|y|^2} 2  \bigg] 
\geq \frac{\MA[v(x+y)] + \MA[v(x-y)]} 2 \\
&\qquad = \frac{v(x+y) - \varphi(x+y) + v(x-y) - \varphi(x-y)} 2 \\
&\qquad= [ \frac{v(x+y)+v(x-y) - K|y|^2} 2 ] - [ \frac{\varphi(x+y)+\varphi(x-y) - K|y|^2} 2 ] \\
&\qquad \geq \frac{v(x+y)+v(x-y) - K|y|^2} 2 - \varphi(x).
\end{aligned}
\]
This means that $[ v(x+y)+v(x-y) - K|y|^2 ]/2$ is a subsolution of \eqref{e:problem}. Moreover, for each fixed $y$,
\[ \begin{aligned}
\sup_{|x|>r} \frac{v(x+y)+v(x-y) - K|y|^2} 2 & - (\varphi(x)+w(x)) \leq \\
& \leq \frac{ \varphi(x+y) + w(x+y)} 2 + \frac{ \varphi(x-y) + w(x-y) } 2 - \frac K 2 |y|^2 - \varphi(x) \\
&\leq \frac{w(x+y) + w(x-y)} 2 \leq C(y) |x|^{1-s}.
\end{aligned}
\]

In the last inequality we used that $0 \leq w(x) \leq |x|^{1-s}$. Therefore, for any $\eps>0$, $(v(x+y)+v(x-y) - K|y|^2)/2 - \eps < \varphi(x)$ for sufficiently large $x$. So, we construct the function $\tilde v(x) = \max\left((v(x+y)+v(x-y) - K|y|^2)/2 - \eps, \varphi(x) \right)$. This function $\tilde v$ is a subsolution of \eqref{e:problem} which coincides with $\varphi$ for large enough $|x|$. From the comparison principle (Proposition \ref{p:comparison}), $\tilde v \leq \varphi + w$. Thus, $v$ is another admissible subsolution for Perron's method. Since $u$ is the supremum of all such subsolutions, then $u(x) \geq \tilde v(x)$. But this is true for all values of $\eps>0$, therefore, for any admissible subsolution $v$,
\[ u(x) \geq \frac{v(x+y)+v(x-y)-K|y|^2} 2.\]
Recall that $u$ is the supremum of all admissible subsolutions $v$. Therefore, for every fixed $x$ and $y$ and an arbitrary $\eps>0$, there is an admissible subsolution $v_1$ so that $u(x+y) - \eps < v_1(x+y) \leq u(x+y)$. Likewise, there is a $v_2$ so that $u(x-y) - \eps < v_2(x-y) \leq u(x-y)$. If we take $v = \max(v_1,v_2)$, we obtain an admissible subsolution $v$ for which both $v(x+y)$ and $v(x-y)$ are larger than $u(x+y)-\eps$ and $u(x-y)-\eps$ respectively. Therefore, taking $\eps \to 0$,
\[ u(x) \geq \frac{u(x+y)+u(x-y) - K|y|^2} 2 \]
for any $x,y \in \R^d$. This clearly implies that $D^2 u \leq K$, from which the $C^{1,1}$ regularity follows.
\end{proof}

\begin{cor}
The function $\MA u(x)$ is locally bounded.
\end{cor}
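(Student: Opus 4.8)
The plan is to sandwich $\MA u$ between $0$ and a finite quantity that is locally bounded, using only the facts already established about the Perron function $u$: it is convex, it is $C^{1,1}$ with $\|u\|_{C^{1,1}}\le\|\varphi\|_{C^{1,1}}$ (Lemma \ref{l:c11}), and it is trapped, $\varphi\le u\le\varphi+w$.

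For the lower bound there is nothing to do: since $u$ is convex and $C^1$, $\subdiff u(x)=\{\grad u(x)\}$, and for every admissible kernel the integrand $u(x+y)-u(x)-y\cdot\grad u(x)$ is nonnegative, so Definition \ref{d:pointwise} gives $\MA u(x)\ge 0$ at every point.

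For the upper bound I would use as a competitor in \eqref{e:monsterma} the kernel $K_0(y)=|y|^{-d-s}$ of the fractional Laplacian itself (trivially admissible), obtaining
\[ \MA u(x)\ \le\ \int_{\R^d}\big(u(x+y)-u(x)-y\cdot\grad u(x)\big)\,|y|^{-d-s}\dd y ,\]
so the problem reduces to bounding this single integral locally uniformly in $x$. Two estimates on the integrand suffice. First, from $\|u\|_{C^{1,1}}\le\|\varphi\|_{C^{1,1}}=:K$ one gets $0\le u(x+y)-u(x)-y\cdot\grad u(x)\le \tfrac K2|y|^2$ for all $y$. Second, since $w$ is bounded and $\varphi$ is asymptotically the cone $\Phi$, convexity of $\varphi$ yields $\varphi(z)\le\varphi(0)+\Phi(z)$ and hence $u(z)\le C'+M|z|$ with $M=\max_{|e|=1}\Phi(e)$; convexity of $u$ then forces $|\grad u(x)|\le M$, so that $0\le u(x+y)-u(x)-y\cdot\grad u(x)\le C_R(1+|y|)$ whenever $|x|\le R$. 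Plugging $\min\!\big(\tfrac K2|y|^2,\ C_R(1+|y|)\big)$ into the integral and splitting at $|y|=1$, the quadratic bound makes the part near the origin finite (this is where $s<2$ enters) and the linear bound makes the tail finite (this is where $s>1$ enters); the total is bounded by a constant depending only on $d,s,\varphi,R$.

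Combining the two halves, $0\le\MA u(x)\le C(d,s,\varphi,R)$ for $|x|\le R$, which is exactly local boundedness. The only real point is the convergence of that one integral, and it is precisely the hypothesis $s\in(1,2)$ that makes both its singular part (near $y=0$, controlled by the $C^{1,1}$ bound on $u$) and its tail (controlled by the trap $\varphi\le u\le\varphi+w$, which is asymptotically linear) finite — everything else is bookkeeping. If one prefers not to name the competitor $K_0$, the identical conclusion comes from Proposition \ref{p:radial-rearrangement}: the radial rearrangement $v$ of $u(x+\cdot)-u(x)-\cdot\,\grad u(x)$ satisfies $v(r)\le Cr^2$ for $r$ small and $v(r)\le C_R r$ for $r$ large, by the same two estimates, so $\MA u(x)=c\int_0^\infty v(r)\,r^{-1-s}\dd r$ is finite and locally bounded for the same reasons.
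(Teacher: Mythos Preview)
Your proof is correct and follows essentially the same approach as the paper, which simply notes that since $u$ is $C^{1,1}$ and asymptotically a cone at infinity, $\MA u(x)$ is a well-defined real number and locally bounded (implicitly appealing to the argument of Proposition~\ref{p:levelsets} with the competitor $K_0(y)=|y|^{-d-s}$). You have spelled out carefully the local uniformity of the constants and the derivation of the linear growth from the trap $\varphi\le u\le\varphi+w$, which the paper leaves to the reader.
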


\begin{proof}
Since $u$ is $C^{1,1}$ and is asymptotically a cone at infinity, then $\MA u(x)$ is a well defined real number and is locally bounded.
\end{proof}

In order to show that $u$ is the solution that we are looking for, we will have to show separately that $\MA u \geq u-\varphi$ and $\MA u \leq u-\varphi$.

\begin{lemma}[$u$ is a subsolution] \label{l:uissub}
The function $u$ is a sub-solution of \eqref{e:problem}
\[ \MA u \geq u-\varphi \]
\end{lemma}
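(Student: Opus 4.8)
The statement is a pointwise inequality, so fix $x_0\in\R^d$ and aim at $\MA u(x_0)\ge u(x_0)-\varphi(x_0)$. Since $u$ is $C^{1,1}$ (Lemma \ref{l:c11}) and asymptotically a cone, Proposition \ref{p:levelsets} makes $\MA u(x_0)$ a finite real number and $\subdiff u(x_0)=\{\grad u(x_0)\}$. The plan is to carry out the \emph{stability} half of Perron's method: approximate $u$ from below by genuine subsolutions and pass to the limit. The first observation is that a finite maximum of admissible subsolutions is again an admissible subsolution: if $v=\max(v_1,v_2)$ and $v(x)=v_1(x)$, then $v\ge v_1$ everywhere with equality at $x$, so Lemma \ref{l:monotonicity} gives $\MA v(x)\ge\MA v_1(x)\ge v_1(x)-\varphi(x)=v(x)-\varphi(x)$, and the pinching $\varphi\le v\le\varphi+w$ is preserved. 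Using this together with a Lindel\"of / running-maximum argument I would produce admissible subsolutions $v_k$ with $v_k\uparrow u$ pointwise; since $0\le u-v_k\le w$ and $w(x)\to 0$ at infinity, the convergence is in fact uniform on $\R^d$, and all $v_k$ are convex and uniformly Lipschitz. One may furthermore take the $v_k$ to be $C^{1,1}$ (e.g. by mollifying and adding a small correction coming from Lemma \ref{l:concavity} and the supersolution $\varphi+w$), which guarantees $\MA v_k(x_0)<\infty$; this matters because a merely convex subsolution with a corner at $x_0$ has $\MA v_k(x_0)=+\infty$, which is consistent with the subsolution inequality but useless for a limiting argument.

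Next I would transfer the subsolution inequality to the limit. Since $v_k$ is $C^1$, $\subdiff v_k(x_0)=\{\grad v_k(x_0)\}=:\{b_k\}$ and $\MA v_k(x_0)\ge v_k(x_0)-\varphi(x_0)$ reads, in view of Definition \ref{d:pointwise},
\[
\int_{\R^d}\big(v_k(x_0+y)-v_k(x_0)-y\cdot b_k\big)K(y)\,dy\;\ge\;v_k(x_0)-\varphi(x_0)\qquad\text{for every admissible }K .
\]
Because $v_k\le u$, $v_k(x_0)\to u(x_0)$ and $u$ is differentiable at $x_0$ with a Lipschitz gradient, one gets $b_k\to\grad u(x_0)$: writing $\eps_k:=u(x_0)-v_k(x_0)\to0$, the inequalities $u(y)\ge v_k(y)\ge v_k(x_0)+b_k\cdot(y-x_0)$ and $u(y)-u(x_0)-\grad u(x_0)\cdot(y-x_0)\le C|y-x_0|^2$ force $\delta_k:=|b_k-\grad u(x_0)|\le C\sqrt{\eps_k}$. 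The same bounds give, with $\tilde u(y):=u(x_0+y)-u(x_0)-y\cdot\grad u(x_0)\ge0$,
\[
v_k(x_0+y)-v_k(x_0)-y\cdot b_k\;\le\;\tilde u(y)+\eps_k+\delta_k|y|\qquad\text{for all }y .
\]

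The delicate step is to pass to the limit in the kernel integral. One would like to insert an arbitrary admissible $K$ into the last two displays and let $k\to\infty$; but the kernels realizing $\MA u(x_0)=\inf_K\int\tilde u\,K$ concentrate where $\tilde u$ is smallest, hence are singular, so $\int K=+\infty$ and the error term $\eps_k\int K$ is useless. I would instead argue through the measures of the sections, using Propositions \ref{p:radial-rearrangement} and \ref{p:formula-with-mu}: $\MA u(x_0)=\tfrac ds\int_0^\infty\mu_u(t)^{-s/d}\,dt$ with $\mu_u(t)=|\{y:\tilde u(y)<t\}|$, and $\MA v_k(x_0)\ge\tfrac ds\int_0^\infty\mu_k(t)^{-s/d}\,dt$ with $\mu_k(t)=|\{y:v_k(x_0+y)-v_k(x_0)-y\cdot b_k<t\}|$. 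The pointwise bound above yields $\mu_k(t)\ge\mu_u(t-\eps_k-\delta_k R)$ on a ball of radius $R$, so $\mu_k\to\mu_u$ for a.e. $t$ (those with $|\{\tilde u=t\}|=0$); the $C^{1,1}$ regularity of the \emph{limit} $u$ gives $\mu_u(t)\ge c\,t^{d/2}$ near $t=0$, and the conical behaviour at infinity gives $\mu_u(t)\ge c\,t^{d}$ for large $t$ (here $s>1$ makes $t^{-s}$ integrable at infinity). Combined with uniform near‑origin control coming from the chosen regularity of the $v_k$, these produce a dominating function, so $\int_0^\infty\mu_k(t)^{-s/d}\,dt\to\int_0^\infty\mu_u(t)^{-s/d}\,dt$, and therefore $\MA u(x_0)\ge\limsup_k\MA v_k(x_0)\ge\limsup_k\big(v_k(x_0)-\varphi(x_0)\big)=u(x_0)-\varphi(x_0)$, which is the desired inequality.

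The main obstacle is precisely this last step. Because $\MA$ is an infimum of nonlocal operators it is in general only lower semicontinuous (cf. Lemma \ref{l:lower-semicontinuous} and the remark following it), so the upper semicontinuity $\MA u(x_0)\ge\limsup_k\MA v_k(x_0)$ along the increasing approximation is genuinely delicate — this is where the ``subtle geometrical considerations'' advertised in the introduction enter, one having to exploit that the limit $u$ is $C^{1,1}$ and conical even though the $v_k$ are only convex, and to control the measures of the sections $D_{x_0}u(t)$ uniformly both near $t=0$ and near $t=+\infty$. Two subsidiary points must also be settled: that $D_{x_0}u(\eps)$ has finite diameter, so that Corollary \ref{c:MA-holder} applies and $\MA u$ is continuous near $x_0$ (this uses the asymptotic‑cone hypothesis on $\varphi$ together with $\varphi\le u\le\varphi+w$ and Lemma \ref{l:bounde-flat-parts}); and the justification that the approximants $v_k$ can be taken $C^{1,1}$ while remaining admissible, so that the values $\MA v_k(x_0)$ entering the limit are finite.
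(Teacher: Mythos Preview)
Your overall strategy --- pick admissible subsolutions $v_k\uparrow u$, use $\MA v_k(x_0)\ge v_k(x_0)-\varphi(x_0)$, and pass to the limit through the section--measure formula $\MA=\tfrac ds\int_0^\infty\mu(t)^{-s/d}\,dt$ --- is sound, and the one--sided bound $\mu_k(t)\ge\mu_u(t-\eps_k-\delta_k R)$ does give the right direction $\limsup_k\MA v_k(x_0)\le\MA u(x_0)$ \emph{once you have a dominating function}. The genuine gap is exactly the point you flag at the end: the domination near $t=0$. Your only control there is $\mu_k(t)\ge c\,(t/M_k)^{d/2}$ with $M_k=[v_k]_{C^{1,1}}$, and nothing in your sketch produces admissible $v_k$ with $M_k$ bounded. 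Mollifying a convex subsolution $v$ at scale $\rho$ and subtracting $C\rho^2$ (your ``small correction'') does give back a subsolution by concavity, but its $C^{1,1}$ seminorm is of order $\rho^{-1}$; making $\rho$ small enough to keep $v_k$ close to $u$ and large enough to keep $M_k^{s/2}(\eps_k+\delta_k)^{1-s/2}\to0$ is a nontrivial balancing act that you have not carried out, and the side constraints $\varphi\le v_k\le\varphi+w$ are not automatic after mollification either.

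The paper gives two proofs, and each shows a clean way around this obstacle. The \emph{duality} proof is closest to yours: it also builds $v_n\to u$ uniformly, but instead of passing to the limit in the nonlinear quantity $\int\mu_k^{-s/d}$ it first freezes an admissible kernel $K$ with $K(y)=|y|^{-d-s}$ on $B_r$, so that $L_K v_n\ge\MA v_n\ge v_n-\varphi$ is a \emph{linear} inequality; the limit $L_K u\ge u-\varphi$ then follows by testing against $L_K^\ast\eta$ and dominated convergence, with no regularity needed on the $v_n$ at all. The $C^{1,1}$ regularity of the limit $u$ is used only at the very end, to show $L_K u - L_{K_r} u\ge -Cr^{2-s}$ and remove the regularization of $K$ near the origin. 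The \emph{geometric} proof avoids sequences altogether: it takes the paraboloid $P$ with $D^2P=2[u]_{C^{1,1}}I$ touching $u$ from above at $x_0$, chooses a \emph{single} subsolution $v$ so that $P-h^2$ touches $v$ from above at a nearby point $y$, and thereby transfers the $C^{1,1}$ bound of $u$ to $v$ at that one point --- exactly the near--origin control you were missing. It then compares sections directly ($D_zu(t)\subset D_yv(t+k)$ with $k\le h^2$) and uses Lemma~\ref{l:MA-pre-holder} to move from $z$ back to $x_0$, picking up errors of size $h^{1-s/2}$. Either device --- linearize before taking limits, or borrow the $C^{1,1}$ of $u$ via a touching paraboloid --- is what your argument needs to close.
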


We give two different proofs of this lemma. The proofs are based on very different arguements, so we believe it is interesting to include both points of view.

\begin{proof}[Geometric proof of Lemma \ref{l:uissub}.]
From Lemma \ref{l:bounde-flat-parts}, $u$ cannot coincide with its supporting plane at $x$ in an unbounded set. Consequently, for any $\eps > 0$, the set $D_x u(\eps)$ is bounded. Let $\Lambda := \mathrm{diam} \ D_x u(\eps)$. Note that by convexity, $|\grad u(z) - \grad u(x)| \geq \eps/\Lambda$ for any $z \notin D_x u(\eps)$.

Let $x$ be any point. From Lemma \ref{l:c11}, we know $u \in C^{1,1}$ and therefore there is a quadratic polynomial $P$ which touches $u$ from above at the point $x$. Let us choose $D^2 P = 2[u]_{C^{1,1}} I$.

From our construction, $u$ is the suppremum of all admissible subsolutions $v$ as in \eqref{e:Perron}. Therefore, for an arbitrarily small positive number $h$, we can find a subsolution $v$ such that $P-h^2$ touches $v$ from above at a point $y$. We observe that $|x-y| \leq Ch$ and $|\grad P(y) - \grad P(x)| \leq 2h$, where $C$ is a constant depending on $[u]_{C^{1,1}}$. 

Let $\ell$ be the affine function tangent to $P$ at the point $y$. Since $v$ is convex and it is tangent from below to the quadratic polynomial $P$, then $v$ will have $\ell$ as its only supporting plane at $y$.

Since $\ell$ is a supporting plane of $v$ at $y$ and $v \leq u$, then $\ell \leq u$. Let $k \geq 0$ so that $\ell + k$ is a supporting plane of $u$ at some point $z$. Since $v(y) + h^2 = P(y) \geq u(y)$, then $k \leq h^2$.

Note that $p := \grad u(z) = \grad \ell = \grad P(y) = \grad v(y)$. In particular $|\grad u(z) - \grad u(x)| \leq 2h$. Then, $z \in D_x u(\eps)$ provided that $2h < \eps / \Lambda$. Consequently, $|x-z| \leq \Lambda$. Using Lemma \ref{l:MA-pre-holder}, 
\[ \MA u(z)  \leq  \MA u(x) + C h^{1-s/2}.\]

The slices of $v$, starting at the level $h^2$, contain the slices of $u$. More precisely, since $u \geq v$ and $\ell$ is the supporting plane of $v$ at $y$ and $\ell+k$ is the supporting plane of $u$ at $z$, 
\[ D_z u (t) \subset D_y v(t+k).\]

From proposition \ref{p:formula-with-mu} and the definition of $D_zu(t)$, 
\[ \MA u(z) = c \int_0^\infty \frac 1 {|D_z u(t)|^{s/d}} \dd t \geq c \int_k^\infty \frac 1 {|D_y v(t)|^{s/d}} \dd t = \MA v(y) - c\int_0^k \frac 1 {|D_y v(t)|^{s/d}} \dd t. \]

We estimate the last term using that the polynomial $P$ touches $v$ from above at the point $y$, therefore
\[ \int_0^k \frac 1 {|D_y v(t)|^{s/d}} \dd t \leq C \int_0^k \frac 1 {t^{s/2}} \dd t = C k^{1-s/2}.\]
Thus, 
\begin{align*}
\MA u(x) + C h^{1-2s} &\geq \MA u(z) \geq \MA v(y) - C k^{1-s/2}, \\
&\geq v(y) - \varphi(y)-C h^{1-s/2}, \\
&\geq u(y) -h^2- \varphi(y)-C h^{1-s/2}.
\end{align*}
Since $|x-y| < Ch$, sending $h \to 0$ and using the continuity of $u$ and $\varphi$, we get
\[ \MA u(x) \geq u(x) - \varphi(x).\]
\end{proof}

\begin{proof}[Proof of Lemma \ref{l:uissub} by duality.]
The function $u$ is the supremum of the family of convex functions $v$, which are sub-solutions, uniformly Lipschitz, and are trapped between $\varphi$ and $\varphi+w$. From Lemma \ref{l:monotonicity}, we see that the maximum of any two admissible subsolutions $v$ is also an admissible subsolution. Recall that $w(x) \approx |x|^{1-s}$ for $|x|$ large and in particular $w \to 0$ at infinity. From the Arzela-Ascoli theorem, the set of admissible subsolutions is a compact family of functions with respect to uniform convergence. By a standard diagonalization method, we can construct a sequence of admisible sub-solutions $v_n$ such that $v_n \to u$ uniformly in $\R^d$. Moreover, since $\varphi \leq v_n \leq \varphi+w$, then also $\int v-v_n \dx \to 0$ by the dominated convergence theorem.

Let $r>0$ be an arbitrary, small, positive number. Let $L$ be a linear integro-differential operator corresponding to some admissible kernel $K$ in the definition of $\MA$. Assume moreover that we have $K(y) = |y|^{-d-s}$ every time $|y|<r$. In particular, this implies $K(y)< r^{-d-s}$ for $|y| > r$.

From the definition of $\MA$, we have that $Lv_n \geq \MA v_n \geq v_n - \varphi$. The operator $L$ is a classical linear, translation invariant, integro-differential operator, therefore the inequality $L v_n \geq v_n - \varphi$ can be understood in the sense of distributions. That is, for any smooth, nonnegative, compactly supported, test function $\eta$, 
\[ \int v_n (L^\ast \eta) \dx \geq \int (v_n-\varphi) \eta \dx.\]
Here the operator $L^\ast$ (the adjoint of $L$) is the linear integro-differential operator corresponding to the kernel $K^\ast(y) = K(-y)$. Since $\eta$ is smooth and compactly supported, $L^\ast \eta$ is a smooth function in $L^1(\R^d)$. Thus, we can use the dominated convergence theorem (using that $u-v_n \leq w$ uniformly) to pass to the limit $v_n \to u$ and obtain
\[ \int u (L^\ast \eta) \dx \geq \int (u-\varphi) \eta \dx.\]
Thus, $Lu \geq u -\varphi$ for all these choices of $L$. Note that from Lemma \ref{l:c11}, $u$ is $C^{1,1}$ and therefore $Lu$ is a continuous (in fact $C^{2-s}$, this follows easily for example from Proposition 2.6 in \cite{silvestre2007regularity}) function and the inequality holds classically.

In order to conclude $\MA u \geq u - \varphi$, we would need to know the above inequality for any admissible kernel $K$ (as if we chose $r=0$). So, let $K$ be any admissible kernel in Definition \ref{d:pointwise} of $\MA u$.

For any small value of $r>0$, we construct an approximated kernel $K_r(y)$ in the following way.
\[
K_r(y) = \begin{cases}
|y|^{-n-s} & \text{ if } |y|<r, \\
K(y) & \text{ if } |y|>r \text{ and } K(y) < r^{-d-s}.
\end{cases}
\]
For all the points $y$ which do not fit the criteria above, we define $K_r(y)$ to be any rearrangement of the values of $K(y)$ in $B_r $ which fall in the interval $(0, r^{-d-s})$. Let $L_r$ be the linear integro-differential operator corresponding to $K_r$. From the argument above, we know that
\[ L_r u \geq u - \varphi.\]
Now we will estimate $Lu - L_r u$ from below. Recall that since $u$ is convex, all incremental quotients are nonnegative.
\begin{align*}
Lu - L_r u &= \int_{\R^d} (u(x+y) - u(x) - y \cdot \grad u(x)) (K(y)-K_r(y)) \dd y, \\
&\geq - \int_{B_r} (u(x+y) - u(x) - y \cdot \grad u(x)) |y|^{-d-s} \dd y,\\
&\geq -C r^{2-s}.
\end{align*}
The fact that $u \in C^{1,1}$ was used in the last inequality. Therefore we obtain
\[ Lu \geq u - \varphi - C r^{2-s}.\]
Since $r$ is arbitrarily, we conclude the proof by making $r \to 0$.
\end{proof}


We continue to prove that $u$ is a supersolution of \eqref{e:problem}, that is $\MA u \leq u-\varphi$.

\begin{lemma}[$u$ is a supersolution] \label{l:uissuper}
Let $b \in \subdiff u(x)$, then
\begin{equation} \label{e:p1}  \int_0^\infty \frac 1 {\mu(t)^{s/d}} \dd t \leq u - \varphi,
\end{equation}
where $\mu(t) := |\{ y : u(y) - u(x) - b \cdot (y-x) < t \}|$.

In particular, taking suppremum in $b$ in the left-hand side of \eqref{e:p1}, we get
\[ \MA u \leq u - \varphi.\]
\end{lemma}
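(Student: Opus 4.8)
The plan is a Perron--maximality argument. Suppose toward a contradiction that $\MA u(x_0) > u(x_0) - \varphi(x_0)$ at some point $x_0$, and set $2\theta := \MA u(x_0) - (u(x_0) - \varphi(x_0)) > 0$. By Lemma~\ref{l:c11}, $u \in C^{1,1}(\R^d)$, so $\subdiff u(x_0) = \{b\}$ with $b = \grad u(x_0)$ and $\ell_0(y) := u(x_0) + b\cdot(y-x_0)$ is the unique supporting plane of $u$ at $x_0$. For $\delta>0$ consider the competitor
\[ v_\delta := \max\!\big(u,\ \ell_0 + \delta\big), \]
which is convex, satisfies $\varphi \le u \le v_\delta$, equals $u$ outside the bounded set $\{u - \ell_0 \le \delta\}$ (bounded by the reasoning of Lemma~\ref{l:bounde-flat-parts}), and has $v_\delta(x_0) = u(x_0)+\delta > u(x_0)$. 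The goal is to show that for small $\delta$ the function $v_\delta$ is an admissible subsolution in the sense of \eqref{e:Perron}; since $u$ is the supremum of all such functions, this is the desired contradiction, and the Lemma follows after rephrasing $\MA u(x_0)\le u(x_0)-\varphi(x_0)$ through Proposition~\ref{p:formula-with-mu} (the supremum over $\subdiff u$ being trivial here).

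Two things must be checked: $\MA v_\delta \ge v_\delta - \varphi$ pointwise and $\varphi \le v_\delta \le \varphi+w$. Outside the set $\{u\le \ell_0+\delta\}$ we have $v_\delta = u$ while $v_\delta \ge u$ globally, so Lemma~\ref{l:monotonicity} gives $\MA v_\delta \ge \MA u$, which is $\ge u-\varphi = v_\delta-\varphi$ there by Lemma~\ref{l:uissub} (boundary points are treated the same way). On the open set $\{u < \ell_0+\delta\}$ where $v_\delta$ is affine, $\grad v_\delta \equiv b$ and for any such point the relevant section is
\[ \{\,y:\ v_\delta(y) - v_\delta(z) - (y-z)\cdot b \le t\,\} = \{\,y:\ u(y) - \ell_0(y) \le \delta+t\,\} = D_{x_0}u(\delta+t), \]
independent of $z$; hence by Proposition~\ref{p:formula-with-mu} the value of $\MA v_\delta$ there is the single constant
\[ \MA v_\delta = C\int_\delta^\infty |D_{x_0}u(t)|^{-s/d}\dd t = \MA u(x_0) - C\int_0^\delta |D_{x_0}u(t)|^{-s/d}\dd t. \]
Since $u \in C^{1,1}$ gives $D_{x_0}u(t) \supseteq B_{c\sqrt t}(x_0)$ and hence $|D_{x_0}u(t)| \ge c\,t^{d/2}$, the correction term is $\le C\delta^{1-s/2}\to 0$ as $\delta\to0$ (here $s<2$). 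Meanwhile $v_\delta - \varphi = (\ell_0+\delta)-\varphi$ on $\{u<\ell_0+\delta\}$, and as $\delta\to0$ this set concentrates at $x_0$, so $v_\delta-\varphi \to u(x_0)-\varphi(x_0)$ uniformly there. Using the gap $2\theta$, for $\delta$ small we get $\MA v_\delta > v_\delta - \varphi$ on $\{u<\ell_0+\delta\}$ as well. Thus $v_\delta$ is a subsolution of $\MA v_\delta - v_\delta \ge -\varphi$ that coincides with $u \le \varphi+w$ outside a bounded set, so by comparison with the supersolution $\varphi+w$ (Proposition~\ref{p:comparison}) we get $v_\delta \le \varphi+w$ everywhere. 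Hence $v_\delta$ is admissible, contradicting $u(x_0)\ge v_\delta(x_0)$.

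The delicate point --- and the real obstacle --- is the behaviour on the flattened set: the argument needs $\{u<\ell_0+\delta\}$ to concentrate at $x_0$ as $\delta\to0$, i.e.\ $u$ to be strictly convex at $x_0$ (so $\{u=\ell_0\}=\{x_0\}$), since otherwise $v_\delta$ flattens an entire face $F=\{u=\ell_0\}$, $\MA v_\delta$ is a single constant on $F$ (equal in the limit to $\MA u(x_0)$, as all points of $F$ share the sections $\{u-\ell_0<t\}$), and one is forced to dominate $v_\delta-\varphi$ at the point of $F$ maximizing $\ell_0-\varphi$ rather than at $x_0$. Ruling out positive-dimensional faces is therefore the crux; the intended mechanism is that on such an $F$ the function $\MA u$ is constant while $\ell_0-\varphi$ is strictly concave on $F$ by the strict convexity of $\varphi$, which --- together with $\MA u \ge u-\varphi = \ell_0-\varphi$ on $F$ from Lemma~\ref{l:uissub} and the continuity of $\MA u$ from Corollary~\ref{c:MA-holder} --- forces $F$ to be a single point. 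Once strict convexity of $u$ is secured, the remaining ingredients (the identification of the sections of $v_\delta$ above and the bound $\int_0^\delta|D_{x_0}u(t)|^{-s/d}\dd t = O(\delta^{1-s/2})$ from the $C^{1,1}$ estimate) are routine.
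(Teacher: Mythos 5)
Your main construction matches the paper's: the competitor $v_\delta = \max(u,\ell_0+\delta)$ is exactly the paper's perturbation $u^\eps$ (the paper writes it as $(u-\eps)^+$ after normalizing $u(x)=0$, $b=0$, but from the surrounding text it is clear $\max(u,\eps)$ is meant), and your computation of $\MA v_\delta$ on the flattened set via Proposition~\ref{p:formula-with-mu} and the $C^{1,1}$ bound $|D_{x_0}u(t)|\ge ct^{d/2}$ is the same as the paper's. So in the case $\{u=\ell_0\}=\{x_0\}$ your argument is correct and coincides with the paper's.

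The problem is the positive-dimensional case, which you correctly flag as the crux but then resolve incorrectly. Your proposed mechanism---that $\MA u$ constant on the face $F$, $\ell_0-\varphi$ strictly concave, $\MA u \ge \ell_0-\varphi$, and continuity of $\MA u$ together ``force $F$ to be a single point''---is simply not a contradiction: a constant can dominate a strictly concave function on any compact convex set, so nothing prevents $F$ from having positive dimension. Moreover, even granting a positive-dimensional $F$, the direct argument cannot be rescued at the point $x^*\in F$ maximizing $\ell_0-\varphi$: the constant value $\MA u\!\restriction_F = -\varphi(x_0)+\delta$ exceeds $\ell_0(x^*)-\varphi(x^*)$ only if $\varphi(x^*)\ge \varphi(x_0)-\delta$, which need not hold, so the gap at the tightest point can vanish. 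The paper handles this differently: it picks $x_1\in F$ where $\varphi$ is \emph{maximal} (so that the gap $\MA u(x_1)-(u(x_1)-\varphi(x_1))$ is at least $\delta$, i.e.\ as large as possible), observes that a convex $\varphi$ on the compact convex $F$ attains its maximum at an extremal point, finds $x_2$ arbitrarily close to $x_1$ at which $u$ is strictly convex, and uses the lower semicontinuity of $\MA u$ (Lemma~\ref{l:lower-semicontinuous}, proved precisely for this purpose) together with continuity of $u$ and $\varphi$ to transfer a gap of size $\ge\delta/2$ to $x_2$. The argument is then repeated at $x_2$, where the strictly convex case applies. You should replace your ``forces $F$ to be a single point'' claim with this extremal-point/lower-semicontinuity perturbation; without it the proof has a genuine hole whenever $u$ has a flat face through $x_0$.
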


\begin{proof}
Assume the opposite. That is that there exists $b \in \subdiff u(x)$ such that
\[\int_0^\infty \frac 1 {\mu_b(t)^{s/d}} \dd t - u(x) + \varphi(x) = \delta > 0.\]
Without loss of generality, let us also assume that $u(x)=0$ and $b=0$.

For $\eps>0$ small enough, we have
\[\int_\eps^\infty \frac 1 {\mu(t)^{s/d}} \dd t > u - \varphi + \delta/2.\]
The left hand side coincides exactly with $\MA u^\eps$ where
\[ u^\eps (y) = (u(y) - \eps)^+,\]
and is constant in the set $\{ y: u(y) < \eps \}$.

Let us first assume that the contact set $\{u=0\}$ consist in $\{x\}$ only. Then $u^\eps(y) = \eps > u(y)$ in an arbitrarily small neighborhood of $x$. In that set we have $\partial u^\eps = \{0\}$, $\MA u$ is constant and larger than $\delta/2$, and from continuity $\varphi < \delta/4$ if $\eps$ is sufficiently small. Therefore, $\MA u > \delta/4$ where $u^\eps > u$.

Since $u^\eps \geq u$, then $\MA u^\eps \geq \MA u \geq 0$ in the set where $u^\eps = u$ from Lemma \ref{l:monotonicity}.

Therefore, $\MA u^\eps \geq u^\eps - \varphi$ and $u^\eps$ would also be subsolution larger than $\varphi$. Since $u^\eps(y) = u(y) \leq \varphi(y) + w(y)$ for $y$ sufficiently large, then $u^\eps \leq \varphi + w$ from Proposition \ref{p:comparison}

If $\{u=0\}$ consist in more than one point we cannot use exactly the same perturbation since $\MA u - u + \varphi$ may be zero at some points in $\{u=0\}$ other than $x$. Let $x_1$ be the point on the compact set $\{u=0\}$ where $\varphi$ is maximum. Note that at this point we have $\MA u(x_1) \geq \MA u(x)$, $\varphi(x_1) \geq \varphi(x)$ and $u(x)=u(x_1)=0$. In particular, $\MA u(x_1) > u(x_1) - \varphi(x_1) + \delta$.

Since the function $\varphi$ is convex, and $\{u=0\}$ is compact and convex, the point $x_1$ can be chosen to be an extremal point of $\{u=0\}$. This implies that there is a point $x_2$, arbitrarily close to $x_1$, such that $u$ is strictly convex at $x_1$. That is, there is a supporting plane that touches $u$ only at $x_2$.

As a consequence of Lemma \ref{l:c11}, $u \in C^1$. Since $u$ is also convex, we can apply Lemma \ref{l:lower-semicontinuous} and obtain that $\MA u$ is lower semicontinuous. Therefore, we can choose $x_2$ sufficiently close to $x_1$ so that $\MA u(x_2) \geq \MA u(x_1)  + \delta / 4$ and also $\varphi(x_2) < \varphi(x_1) + \delta/4$. In particular $\MA u(x_2) > u(x_2) - \varphi(x_2) + \delta/2$ and we can repeat the argument above to the point $x_2$ instead of $x$ replacing $\delta$ with $\delta/2$.
\end{proof}

Combining the results of Lemmas \ref{l:uissub} and \ref{l:uissuper}, we see that the function $u$ we constructed is indeed the solution we wanted. We state this in the following theorem.

\begin{thm} \label{t:well-posed}
The equation \eqref{e:problem} has a unique solution $u$ such that $u-\varphi \to 0$ at infinity. Moreover, this solution is $C^{1,1}$.
\end{thm}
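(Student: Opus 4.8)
The plan is to assemble the ingredients already established and then supply the uniqueness statement and the behaviour at infinity.

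First I would record existence. The candidate is the function $u$ produced by Perron's method in \eqref{e:Perron}, namely the supremum of all admissible subsolutions $v$ with $\varphi \le v \le \varphi+w$. As already noted, this family is nonempty (it contains $\varphi$), consists of convex, uniformly Lipschitz functions, so $u$ is convex, finite, and Lipschitz; by Lemma \ref{l:c11} it is in fact $C^{1,1}(\R^d)$ with $\|u\|_{C^{1,1}} \le \|\varphi\|_{C^{1,1}}$, and hence (the corollary following that lemma) $\MA u$ is locally bounded. Then Lemma \ref{l:uissub} gives $\MA u \ge u-\varphi$ and Lemma \ref{l:uissuper} gives $\MA u \le u-\varphi$; combining them yields $\MA u = u-\varphi$ pointwise, so $u$ solves \eqref{e:problem} and is $C^{1,1}$.

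Next I would verify the asymptotics. By construction $\varphi \le u \le \varphi+w$, and the upper-barrier lemma gives $0 \le w(x) \le C(1+|x|)^{1-s}$; since $s\in(1,2)$ the exponent $1-s$ is negative, so $w(x)\to 0$ as $|x|\to\infty$, whence $0 \le u(x)-\varphi(x) \le w(x) \to 0$.

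Finally, uniqueness. Let $u_1,u_2$ be two (continuous, necessarily convex by Proposition \ref{p:hastobeconvex}) solutions with $u_i-\varphi\to 0$ at infinity. Then $u_1-u_2=(u_1-\varphi)-(u_2-\varphi)\to 0$ at infinity, so for every $\eps>0$ there is $R>0$ with $u_1\le u_2+\eps$ on $\R^d\setminus B_R$. Put $v:=u_2+\eps$; since adding a constant does not change $\MA$, we have $\MA v - v = \MA u_2 - u_2 - \eps = -\varphi-\eps \le -\varphi = \MA u_1 - u_1$. Applying the comparison principle, Proposition \ref{p:comparison}, with $f=-\varphi$ and $\Omega=B_R$ gives $u_1\le u_2+\eps$ on all of $\R^d$; letting $\eps\to 0$ yields $u_1\le u_2$, and the symmetric argument gives $u_1=u_2$.

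The genuine difficulty has been absorbed into Lemmas \ref{l:uissub} and \ref{l:uissuper} (the sub- and supersolution properties) and Lemma \ref{l:c11} (the $C^{1,1}$ bound); within the proof of the theorem itself the only delicate point is reconciling the comparison principle — which only compares functions that already agree off a bounded set — with the global problem, and this is exactly what the decay $u_i-\varphi\to 0$ furnishes through the $\eps$-shift above.
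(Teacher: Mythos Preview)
Your proposal is correct and follows exactly the approach of the paper: existence and $C^{1,1}$ regularity come from the Perron construction together with Lemmas \ref{l:c11}, \ref{l:uissub}, and \ref{l:uissuper}, and the asymptotics from the sandwich $\varphi\le u\le\varphi+w$. The paper in fact leaves the uniqueness argument implicit (it states the theorem immediately after the two lemmas without further proof); your $\eps$-shift application of Proposition \ref{p:comparison} is the natural way to extract it, and is carried out correctly---the only microscopic adjustment is that convexity of an arbitrary solution follows more directly from part a) of Definition \ref{d:pointwise} (since $\MA u_i=u_i-\varphi>-\infty$ forces $\subdiff u_i(x)\neq\emptyset$ everywhere) than from Proposition \ref{p:hastobeconvex}, which is phrased for differentiable $u$.
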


\section{Some remarks and open questions}

\label{s:questions}

\subsection{The Dirichlet problem in a bounded domain}

It seems tempting to study the following Dirichlet problem.
\begin{align*}
\MA u &= f \qquad \text{ in } B_1, \\
u &= \varphi \qquad \text{ in } \R^d \setminus B_1.
\end{align*}

Here $\varphi$ is a convex function in $\R^d$ and $f$ is any given nonnegative function.

Actually, the problem above does not always have a solution. 

Let $U(x)$ be the convex envelope in $\R^d$ to the restriction of $\varphi$ to $\R^d \setminus B_1$. 
The fact that $u$ must be convex gives us right away that $u$ is less or equal to $U$ in $B_1$.

If $\varphi$ is not an affine function, we will get that $\MA U > 0$ in $B_1$. If we choose $0 \leq f < \MA U$ in $B_1$, then we cannot have a solution $u$ to the problem above because it would contradict the comparison principle from Proposition \ref{p:comparison}.

\subsection{A regularization procedure}

In the proof of Lemma \ref{l:uissub} we implicitly used an approximation of the operator $\MA u$ with one which is easier to study. This approximation may be useful in other contexts, so we state it explicitly in this subsection. The approximated operator is the following.

\begin{equation} \label{e:monsterma-con-piquitos}  \begin{split}
\MA^\eps u(x) = \inf\bigg\{ &\int_{\R^d} (u(x+y) - u(x) - y \cdot \grad u(x)) K(y) \dd y : \\
& \text{ from all kernels $K$ such that } |\{y : K(y)>r^{-n-s}\}| = |B_r|, \\  &\text{ and } K(y) = |y|^{-d-s} \text{ for } |y| < \eps \bigg\}. 
\end{split} 
\end{equation}

The operator $\MA^\eps$ approximates $\MA$ as $\eps \to 0$. The point is that for $\eps>0$, the operator $\MA^\eps u(x)$ is realized by an integral operator $\int (u(x+y) - u(x) - y \cdot \grad u(x)) K(y) \dd y$ with a kernel $K$ such that $K(y) = |y|^{-d-s}$ for $|y| < \eps$. Now, for any bounded right hand side $f$, we have that if
\[ \MA^\eps u = f \text{ in } \Omega,\]
and $f \in L^\infty$, then $u \in C^{1,\alpha}$ in the interior of $\Omega$. This follows from Theorem 6.1 in \cite{caffarelli2011regularity} applied to a suitably rescaled solution $u$.

In Theorem \ref{t:well-posed}, we obtained a solution to our problem \eqref{e:problem} using Perron's method. Another approach may be to construct a solution $u^\eps$ to $\MA^\eps u^\eps = u^\eps -\varphi$, find a uniform $C^{1,1}$ estimate, and pass to the limit as $\eps \to 0$. 

\subsection{Viscosity solutions}

Like for other nonlocal equations (see \cite{barles2008second} and \cite{caffarelli2009regularity}), we can define the concept of viscosity solutions for equations involving the operator $\MA$.

\begin{defn}\label{d:viscosity}
We say that an upper semicontiuous function $u$ satisfies $M_d u \geq f$ (subsolution) in the viscosity sense in a domain $\Omega$, if every time a smooth function $\varphi$ touches $u$ from above at a point $x \in \Omega$ (meaning that $\varphi(x) = u(x)$ whereas $\varphi \geq u$ in a neighborhood of $x$), then the following happens. Let $B_r(x)$ be a neighborhood of $x$ where $\varphi \geq u$. If we construct the auxiliary function $v$ as
\[ 
v(y) = \begin{cases}
u(y) & \text{if } y \notin B_r(x),\\
\varphi(y) & \text{if } y \in B_r(x).
\end{cases}
\]
Then $M_s v(x) \geq f(x)$.

Similarly, we define for lower semicontinuous functions $u$ that $M_du \leq f$ in $\Omega$ (supersolution) in the viscosity saying that for every test function $\varphi$ touching $u$ from below at $x \in \Omega$, the auxiliary function $v$ satisfies $M_s v(x) \leq f(x)$

A viscosity solution of $M_s u =f$ in $\Omega$ is a function which satisfies both $M_s u \geq f$ and $M_s u \leq f$ in $\Omega$ in the viscosity sense.
\end{defn}

In this paper, we did not use the concept of viscosity solutions. We used a pointwise definition of the operator instead explained in section \ref{s:pointwise}. In particular, our comparison result of Proposition \ref{p:comparison} is not proved for viscosity subsolutions and solutions of an equation. The natural question is the following:

\noindent \textbf{Question. } Let $u$ be a convex function. Is it equivalent that $\MA u \geq f$ in the viscosity sense in a domain $\Omega$ with the point-wise definition of $\MA \geq f$ given in section \ref{s:pointwise}?

\subsection{A conjecture about global solutions with constant right hand side}

A classical theorem by Jorgens \cite{jorgens1954losungen}, Calabi \cite{calabi1958improper} and Pogorelov \cite{pogorelov1972improper} says that every global convex solution $u$ to the Monge-Amp\`ere equation
\[ \det D^2 u = 1,\]
must be a quadratic polynomial. We present here an analogous nonlocal statement.

Note that the operator $\MA u$ defined in this paper cannot be applied to functions $u$ which grow quadratically at infinity. This is because the tails of the integrals in Definition \eqref{d:monsterMA} would diverge. We now show a modification of the operator which allows $u$ to grow arbitrarily at infinity.

Given a fixed kernel $K_0$, we define $\tilde \MA u$ by the following formula
\begin{equation} \label{e:monsterma-localized}  \begin{split}
\tilde \MA u(x) = \inf & \bigg\{ \int_{\R^d} (u(x+y) - u(x) - y \cdot \grad u(x)) K(y) \dd y : \\
& \text{ from all kernels $K$ such that for all $t>0$, } |\{y : K(y)>t\}| = |\{y : K_0(y)>t\}| \bigg\}. 
\end{split} 
\end{equation}

Clearly, if $K_0(y) = |y|^{-d-s}$, then $\tilde \MA$ and $\MA$ coincide. In order to make sure that the tails of the integral do not diverge, we select
\[ K_0(y) = |y|^{-d-s} \chi_{B_1}.\]

We emphasize that different choices of $K_0$ may be preferable for different problems.

\noindent \textbf{Question. } Assume $u : \R^d \to \R$ is a convex function which satisfies
\[ \tilde \MA u = 1 \ \text{ in } \R^d.\]
Does it imply that $u(x)$ is a quadratic polynomial?

We believe that the answer to this question should be affirmative, which would provide a nonlocal version of the result by Jorgens, Calabi and Pogorelov.

\subsection{Questions regarding interior regularity}

The $C^{1,1}$ regularity that we obtained for the solutions in Theorem \ref{t:well-posed} is based on global considerations for the problem that we proposed. It is still unclear whether we can prove more flexible local regularity results. For example, a natural question would be the following.

\noindent \textbf{Question. } Assume $u : \R^d \to \R$ is a convex function which satisfies
\[ \MA u = 1 \ \text{ in } B_1.\]
How regular is $u$ in $B_{1/2}$?

A less ambitious but already challenging question is whether the solution $u$ obtained in Theorem \ref{t:well-posed} is more regular than $C^{1,1}$.

\section{Acknowledgments}

L. Caffarelli was partially supported by NSF grant DMS-1160802. L. Silvestre was partially supported by NSF grants DMS-1001629 and DMS-1065979. The authors wish to thank Maik Urban for pointing out some issues in an earlier version of this paper.

\bibliographystyle{plain}   
\bibliography{monsterma}             
\index{Bibliography@\emph{Bibliography}}%

\end{document}